\documentclass[12pt, reqno]{amsart}
\usepackage{amsmath, amsthm, amscd, amsfonts, amssymb, graphicx, color}
\usepackage[bookmarksnumbered, colorlinks, plainpages]{hyperref}
\hypersetup{colorlinks=true,linkcolor=red, anchorcolor=green, citecolor=cyan, urlcolor=red, filecolor=magenta, pdftoolbar=true}

\textheight 22.5truecm \textwidth 14.5truecm
\setlength{\oddsidemargin}{0.35in}\setlength{\evensidemargin}{0.35in}

\setlength{\topmargin}{-.5cm}

\newtheorem{theorem}{Theorem}[section]
\newtheorem{lemma}[theorem]{Lemma}

\newtheorem{cor}[theorem]{Corollary}
\theoremstyle{definition}

\theoremstyle{remark}
\newtheorem{remark}[theorem]{\bf{Remark}}
\numberwithin{equation}{section}
\begin{document}

\title {On inequalities for A-numerical radius of operators} 

\author{Pintu Bhunia, Kallol Paul and Raj Kumar Nayak}

\address{(Bhunia) Department of Mathematics, Jadavpur University, Kolkata 700032, West Bengal, India.}
\email{pintubhunia5206@gmail.com}

\address{(Paul) Department of Mathematics, Jadavpur University, Kolkata 700032, West Bengal, India.}
\email{kalloldada@gmail.com}

\address{(Nayak) Department of Mathematics, Jadavpur University, Kolkata 700032, West Bengal, India.}
\email{rajkumarju51@gmail.com}



\subjclass[2010]{Primary 47A12, Secondary  47A30, 47A63.}
\keywords{ A-numerical radius; A-adjoint operator; A-selfadjoint operator, Positive operator.}

\maketitle
\begin{abstract}
Let  $A$ be a positive operator on a complex Hilbert space $\mathcal{H}.$  We present inequalities concerning upper and lower bounds for $A$-numerical radius of operators, which improve on and generalize the existing ones, studied recently in [A. Zamani, A-Numerical radius inequalities for semi-Hilbertian space operators, Linear Algebra Appl. 578 (2019) 159-183].  We also obtain some inequalities for $B$-numerical radius of $2\times 2$ operator matrices where $B$ is the $2\times 2$ diagonal operator matrix whose diagonal entries are $A$. Further we obtain upper bounds for $A$-numerical radius for product of operators which improve on the existing bounds.  
\end{abstract}

\section{Introduction}
\noindent Let $\mathcal{H}$ be a complex Hilbert space with usual inner product $\langle .,. \rangle$ and $\|.\|$ be the norm induced from $\langle .,. \rangle$. Let $\mathcal{B}(\mathcal{H})$ denote the $C^*$-algebra of all bounded linear operators on $\mathcal{H}.$ Throughout this article we assume $I$ and $O$ are identity operator and zero operator on $\mathcal{H}$, respectively. A self-adjoint operator $A\in \mathcal{B}(\mathcal{H})$ is called positive  if $\langle Ax,x \rangle \geq 0$ for all $x\in \mathcal{H}$ and  is called strictly positive if $\langle Ax,x \rangle > 0$ for all $(0\neq)x\in \mathcal{H}$. For a positive (strictly positive) operator $A$ we write $A\geq 0$ $(A>0)$. Let $B=\left(\begin{array}{cc}
	A&O \\
	O&A
\end{array}\right)$. Then $B \in \mathcal{B}(\mathcal{H} \oplus \mathcal{H})$ is positive or strictly positive if $A$ is positive or strictly positive respectively. Let us fix the alphabets $A$ and  $B$ for positive operator on $\mathcal{H}$ and $ \mathcal{H} \oplus \mathcal{H}$ respectively. Clearly $A$ induces a positive semidefinite sesquilinear form $\langle .,. \rangle _A : \mathcal{H} \times \mathcal{H} \rightarrow \mathbb{C}$ defined as $\langle x,y \rangle _A=\langle Ax,y \rangle$ for $x,y\in \mathcal{H}$. Let $\|.\|_A$ denote the semi-norm on $\mathcal{H}$ induced from the  sesquilinear form $\langle .,. \rangle_A,$ that is, $\|x\|_A=\sqrt{\langle x,x \rangle_A}$ for all $x\in \mathcal{H}.$ It is easy to verify that $\|.\|_A$ is a norm if and only if $A$ is a strictly positive operator. Also $(\mathcal{H}, \|.\|_A)$ is complete space if and only if the range $\mathcal{R}(A)$ of $A$ is closed in $\mathcal{H}$. By $\overline {\mathcal{R}(T)}$ we denote the norm closure of $\mathcal{R}(T)$ in $\mathcal{H}$. For $T\in \mathcal{B}(\mathcal{H})$, A-operator semi-norm of $T$, denoted as $\|T\|_A$,  is defined as \[\|T\|_A=\sup_{x\in  \overline{\mathcal{R}(A)},x\neq 0}\frac{\|Tx\|_A}{\|x\|_A}.\] 
Here we note that for a given $T \in  \mathcal{B}(\mathcal{H})$, if there exists $c>0$ such that $\|Tx\|_A \leq c\|x\|_A$ for all $x\in \overline{\mathcal{R}(A)}$ then $\|T\|_A<+\infty$. 
Again A-minimum modulus of $T$, denoted as $m_A(T)$ (see \cite{Z2}), is defined as \[m_A(T)=\inf_{x\in  \overline{\mathcal{R}(A)},x\neq 0}\frac{\|Tx\|_A}{\|x\|_A}.\]
We set $\mathcal{B}^A(\mathcal{H})=\{ T\in \mathcal{B}(\mathcal{H}): \|T\|_A <+ \infty\}.$ It is easy to verify that $\mathcal{B}^A(\mathcal{H})$ is not generally a subalgebra of $\mathcal{B}(\mathcal{H})$ and $\|T\|_A=0$ iff $ATA=0.$ 
For $T \in \mathcal{B}(\mathcal{H})$, an operator $R \in \mathcal{B}(\mathcal{H})$ is called an $A$-adjoint of $T$ if for every $x,y \in \mathcal{H}$ such that $\langle Tx,y \rangle_A=\langle x,Ry \rangle_A$, that is, $AR=T^*A$ where $T^*$ is the adjoint of $T$. For any operator $T\in \mathcal{B}(\mathcal{H})$, A-adjoint of $T$ may or may  not exist. In fact, an operator $T \in \mathcal{B}(\mathcal{H})$ may have one or more than one A-adjoint operators, also it may have none. By Douglas Theorem \cite{doug}, we have  that an operator $T\in \mathcal{B}(\mathcal{H})$ admits A-adjoint if 
  $$\mathcal{R}(T^{*}A)\subseteq \mathcal{R}(A). $$ 
Now we consider an example that  $A=\left(\begin{array}{cc}
0&0\\
0&1
\end{array}\right)$ and $T=\left(\begin{array}{cc}
0&1\\
1&0
\end{array}\right)$ on $\mathbb{C}^2.$ Then we see that $\mathcal{R}(T^{*}A)=\{(x,0):x\in \mathbb{C}\}$ and $\mathcal{R}(A)=\{(0,x):x\in \mathbb{C}\}.$ So by Douglas Theorem \cite{doug} we conclude that $T$  have no A-adjoint. 
\noindent Let $\mathcal{B}_A(\mathcal{H})$ be the collection of all operators in $ \mathcal{B}^A(\mathcal{H})$ which admits A-adjoint. Note that $\mathcal{B}_A(\mathcal{H})$ is a sub-algebra of $\mathcal{B}(\mathcal{H})$ which is neither closed nor dense in $\mathcal{B}(\mathcal{H}).$  For $T \in \mathcal{B}(\mathcal{H})$, A-adjoint operator of $T$ is written as $T^{\sharp_A}$. It is well known that $T^{\sharp_A}=A^\dag T^*A$ where $A^\dag$ is the Moore-Penrose inverse of $A$, (see \cite{MKX}). It is useful that if $T \in \mathcal{B}_A(\mathcal{H})$ then $AT^{\sharp_A}=T^*A$. An operator $T\in \mathcal{B}_A(\mathcal{H})$ is said to be A-self-adjoint operator if $AT$ is self-adjoint, that is, $AT=T^*A$ and it is called A-positive if $AT\geq 0$. For A-positive operator $T$ we have 
\[\|T\|_A=\sup\{\langle Tx,x \rangle_A: x\in \mathcal{H}, \|x\|_A=1\}.\] 
An operator $U\in  \mathcal{B}_A(\mathcal{H})$ is said to be $A$-unitary if $U^{\sharp_A}U=(U^{\sharp_A})^{\sharp_A}U^{\sharp_A}=P_A$, $P_A$ is the orthogonal projection onto $\overline{\mathcal{R}(A)}.$  Here we note that if $T\in \mathcal{B}_A(\mathcal{H})$ then $T^{\sharp_A}\in \mathcal{B}_A(\mathcal{H})$, $(T^{\sharp_A})^{\sharp_A}=P_ATP_A$. Also $T^{\sharp_A}T$, $TT^{\sharp_A}$ are A-self-adjoint and A-positive operators and so 
\[\|T^{\sharp_A}T\|_A =\|TT^{\sharp_A}\|_A =\|T\|^2_A =\|T^{\sharp_A}\|^2_A.\]
Also for $T,S \in  \mathcal{B}_A(\mathcal{H})$,  $(TS)^{\sharp_A}=S^{\sharp_A}T^{\sharp_A}$, $\|TS\|_A\leq \|T\|_A\|S\|_A$ and $\|Tx\|_A\leq \|T\|_A\|x\|_A$ for all $x\in \mathcal{H}.$ For further details we refer the reader to \cite{ACG, ACG2, AS}. For an operator $T\in \mathcal{B}_A(\mathcal{H})$, we write $\textit{Re}_A(T)=\frac{1}{2}(T+T^{\sharp_A})$ and $\textit{Im}_A(T)=\frac{1}{2i}(T-T^{\sharp_A})$.\\
For $T\in \mathcal{B}_A(\mathcal{H})$, A-numerical radius of $T$, denoted as $w_A(T)$, is defined  as  
\[ w_A(T)=\sup\{|\langle Tx,x \rangle_A|: x \in \mathcal{H}, \|x\|_A=1\},  ~~\mbox{(see \cite{BFA})}. \]
Also, for $T\in \mathcal{B}_A(\mathcal{H})$, A-Crawford number of $T$, denoted as $c_A(T)$ (see  \cite{Z2}), is defined as  
\[c_A(T)=\inf\{|\langle Tx,x \rangle_A|: x\in \mathcal{H}, \|x\|_A=1\}.\]
For $T\in \mathcal{B}_A(\mathcal{H})$, it is well-known that A-numerical radius of $T$ is equivalent to A-operator semi-norm of $T$, (see \cite{Z}), satisfying the following inequality: 
\[\frac{1}{2}\|T\|_A \leq w_A(T)\leq \|T\|_A.\]

\noindent Over the years many mathematicians have studied numerical radius inequalities in \cite{BBP1,B,  BBP, BBP2, D, GR, HKS, HKS2, KMY, K, PB, PB2, S, Y}. 
Recently,  Zamani  \cite{Z}  have studied A-numerical radius and computed some inequalities for A-numerical radius. In this paper, we compute some inequalities for B-numerical radius of $2 \times 2$ operator matrices which generalize and improve on the existing inequalities. Also we obtain some inequalities for A-numerical radius of operators in  $\mathcal{B}_A(\mathcal{H})$ which improve on the existing inequalities in \cite{Z}. Further we obtain  A-numerical radius bounds for  sum of product of operators in $\mathcal{B}_A(\mathcal{H})$ which improve on the existing bounds.

\section{\textbf{A-numerical radius inequalities for operators in $\mathcal{B}_A(\mathcal{H})$}}
We begin this section with the following three results proved by Zamani \cite{Z}.

\begin{lemma}\label{lemma:Z3}
	Let $T\in \mathcal{B}_A(\mathcal{H})$ be an $A$-self-adjoint operator. Then
	\[w_A(T)=\|T\|_A.\]
\end{lemma}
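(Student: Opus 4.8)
The plan is to reduce the statement to the reverse of an estimate already recorded in the excerpt. Since $\frac12\|T\|_A\le w_A(T)\le\|T\|_A$ holds for every $T\in\mathcal{B}_A(\mathcal{H})$, the inequality $w_A(T)\le\|T\|_A$ is free, and it suffices to prove $\|T\|_A\le w_A(T)$ under the hypothesis that $T$ is $A$-self-adjoint, i.e. $AT=T^*A$.

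First I would exploit $A$-self-adjointness to turn $\langle T\cdot,\cdot\rangle_A$ into a symmetric form: for all $x,y\in\mathcal{H}$,
\[
\langle Tx,y\rangle_A=\langle ATx,y\rangle=\langle T^*Ax,y\rangle=\langle Ax,Ty\rangle=\langle x,Ty\rangle_A ,
\]
so in particular $\langle Tx,x\rangle_A\in\mathbb{R}$ for every $x$. I would also record the auxiliary bound $|\langle Tz,z\rangle_A|\le w_A(T)\,\|z\|_A^2$ for \emph{every} $z\in\mathcal{H}$: if $\|z\|_A>0$ this is the definition of $w_A(T)$ after normalizing $z$, while if $\|z\|_A=0$ then the Cauchy--Schwarz inequality for $\langle\cdot,\cdot\rangle_A$ gives $|\langle Tz,z\rangle_A|\le\|Tz\|_A\|z\|_A=0$.

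The core is a polarization argument. Using the symmetry above (so that $\langle Ty,x\rangle_A=\overline{\langle Tx,y\rangle_A}$), for all $x,y\in\mathcal{H}$,
\[
\langle T(x+y),x+y\rangle_A-\langle T(x-y),x-y\rangle_A=2\langle Tx,y\rangle_A+2\langle Ty,x\rangle_A=4\,\mathrm{Re}\langle Tx,y\rangle_A .
\]
Since $\langle T(x+y),x+y\rangle_A$ is real, the left-hand side is at most $|\langle T(x+y),x+y\rangle_A|+|\langle T(x-y),x-y\rangle_A|\le w_A(T)\big(\|x+y\|_A^2+\|x-y\|_A^2\big)$, and the parallelogram identity for the semi-norm $\|\cdot\|_A$ (valid because it is induced by $\langle\cdot,\cdot\rangle_A$) gives $\|x+y\|_A^2+\|x-y\|_A^2=2\|x\|_A^2+2\|y\|_A^2$. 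Hence
\[
\mathrm{Re}\langle Tx,y\rangle_A\le\frac{w_A(T)}{2}\big(\|x\|_A^2+\|y\|_A^2\big).
\]
Now fix $x$ with $\|x\|_A=1$: if $\|Tx\|_A=0$ there is nothing to prove, and otherwise take $y=Tx/\|Tx\|_A$, so that $\|y\|_A=1$ and $\mathrm{Re}\langle Tx,y\rangle_A=\|Tx\|_A$, whence $\|Tx\|_A\le w_A(T)$. Taking the supremum over all $x\in\overline{\mathcal{R}(A)}$ with $\|x\|_A=1$ — which equals $\|T\|_A$ by positive homogeneity, noting that $x\in\overline{\mathcal{R}(A)}$ with $x\neq 0$ forces $\|x\|_A>0$ — yields $\|T\|_A\le w_A(T)$, and with the known reverse inequality we get $w_A(T)=\|T\|_A$.

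I do not expect a serious obstacle; the only points needing care are the degeneracy of $\|\cdot\|_A$ — handled above via Cauchy--Schwarz in the $\|z\|_A=0$ case and via the observation that $\|\cdot\|_A$ is strictly positive on $\overline{\mathcal{R}(A)}\setminus\{0\}$ — and the passage from the supremum defining $\|T\|_A$ to a supremum over $\|\cdot\|_A$-unit vectors, which is immediate from homogeneity.
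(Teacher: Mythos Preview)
Your proof is correct. The polarization argument is standard, and you have handled the semi-definiteness of $\|\cdot\|_A$ carefully: the Cauchy--Schwarz bound in the $\|z\|_A=0$ case, and the fact that $\overline{\mathcal{R}(A)}\cap\ker A=\{0\}$ (so that nonzero vectors in $\overline{\mathcal{R}(A)}$ have positive $A$-seminorm), are exactly the points where a careless adaptation of the Hilbert-space proof could slip.

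However, there is nothing to compare against: the paper does not prove this lemma. It is listed together with Lemmas~\ref{lemma:Z1} and~\ref{lemma:Z2} under the sentence ``We begin this section with the following three results proved by Zamani~\cite{Z}'' and is simply quoted from that reference without argument. Your write-up therefore supplies a self-contained proof where the paper only gives a citation; if one consults Zamani's paper, the argument there is essentially the same polarization idea you use.
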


\begin{lemma}\label{lemma:Z1}
Let $T\in \mathcal{B}_A(\mathcal{H})$. For every $\theta \in \mathbb{R},$
\[w_A\left (\textit{Re}_A(e^{i\theta }T) \right )=\left\| \textit{Re}_A(e^{i\theta }T) \right\|_A.\]
\end{lemma}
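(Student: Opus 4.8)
The plan is to deduce this from Lemma~\ref{lemma:Z3}: it suffices to show that $\textit{Re}_A(e^{i\theta}T)$ is an $A$-self-adjoint element of $\mathcal{B}_A(\mathcal{H})$, and then the identity $w_A(\cdot)=\|\cdot\|_A$ for that operator is immediate from Lemma~\ref{lemma:Z3}.

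First I would record the formal bookkeeping. Since $\mathcal{B}_A(\mathcal{H})$ is a subalgebra of $\mathcal{B}(\mathcal{H})$, closed under $S\mapsto S^{\sharp_A}$, and $(\lambda S)^{\sharp_A}=\overline{\lambda}\,S^{\sharp_A}$ for $\lambda\in\mathbb{C}$ (in particular $I\in\mathcal{B}_A(\mathcal{H})$, so scalar multiples of $T$ stay in $\mathcal{B}_A(\mathcal{H})$), setting $S=e^{i\theta}T$ gives $S\in\mathcal{B}_A(\mathcal{H})$ and $S^{\sharp_A}=e^{-i\theta}T^{\sharp_A}\in\mathcal{B}_A(\mathcal{H})$, whence
\[
\textit{Re}_A(e^{i\theta}T)=\tfrac12\bigl(e^{i\theta}T+e^{-i\theta}T^{\sharp_A}\bigr)\in\mathcal{B}_A(\mathcal{H}),
\]
so that both $w_A$ and $\|\cdot\|_A$ of this operator are well defined and finite.

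The main step is the $A$-self-adjointness. Using the relation $AS^{\sharp_A}=S^*A$, valid for $S\in\mathcal{B}_A(\mathcal{H})$, I would compute
\[
A\,\textit{Re}_A(S)=\tfrac12\bigl(AS+AS^{\sharp_A}\bigr)=\tfrac12\bigl(AS+S^*A\bigr),
\]
and since $A=A^*$ one gets $\bigl(AS+S^*A\bigr)^*=S^*A+AS$; hence $A\,\textit{Re}_A(S)$ is self-adjoint, i.e. $\textit{Re}_A(S)=\textit{Re}_A(e^{i\theta}T)$ is $A$-self-adjoint. Applying Lemma~\ref{lemma:Z3} to it then gives $w_A\bigl(\textit{Re}_A(e^{i\theta}T)\bigr)=\bigl\|\textit{Re}_A(e^{i\theta}T)\bigr\|_A$, as required.

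I do not expect a real obstacle here; the only point to be attentive to is that the verification of $A$-self-adjointness should go through $A$ directly via $AS^{\sharp_A}=S^*A$, rather than via $(S^{\sharp_A})^{\sharp_A}$, since in general only $(S^{\sharp_A})^{\sharp_A}=P_ASP_A$ holds and not $(S^{\sharp_A})^{\sharp_A}=S$.
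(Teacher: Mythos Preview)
Your argument is correct: verifying that $A\,\textit{Re}_A(e^{i\theta}T)=\tfrac12(AS+S^*A)$ is self-adjoint via $AS^{\sharp_A}=S^*A$ shows $\textit{Re}_A(e^{i\theta}T)$ is $A$-self-adjoint, and then Lemma~\ref{lemma:Z3} gives the conclusion. Note, however, that the paper does not supply its own proof of this lemma---it is quoted from Zamani~\cite{Z} along with Lemmas~\ref{lemma:Z3} and~\ref{lemma:Z2}---so there is nothing in the paper to compare against; your derivation from Lemma~\ref{lemma:Z3} is exactly the natural one and presumably mirrors the original.
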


\begin{lemma}\label{lemma:Z2}
Let $T\in \mathcal{B}_A(\mathcal{H})$. Then
\[w_A(T)=\sup_{\theta \in \mathbb{R}}\left\|\textit{Re}_A(e^{i\theta }T)\right\|_A ~~\mbox{ and } ~~w_A(T)=\sup_{\theta \in \mathbb{R}}\left\|\textit{Im}_A(e^{i\theta }T)\right\|_A. \]
\end{lemma}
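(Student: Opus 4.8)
The plan is to reduce $w_A(T)$ to a supremum over $\theta$ of quantities of the form $\sup_{\|x\|_A=1}\langle \textit{Re}_A(e^{i\theta}T)x,x\rangle_A$ and then to invoke Lemma \ref{lemma:Z1}. First I would record the elementary pointwise identity: for every $x\in\mathcal{H}$ and every $\theta\in\mathbb{R}$,
\[ \langle \textit{Re}_A(e^{i\theta}T)x,x\rangle_A = \textit{Re}\left(e^{i\theta}\langle Tx,x\rangle_A\right). \]
This follows by expanding $\textit{Re}_A(e^{i\theta}T)=\tfrac12\big(e^{i\theta}T+(e^{i\theta}T)^{\sharp_A}\big)$ and using that $\langle (e^{i\theta}T)^{\sharp_A}x,x\rangle_A=\overline{\langle e^{i\theta}Tx,x\rangle_A}$, which is exactly the defining relation $AS^{\sharp_A}=S^*A$ for the $A$-adjoint applied to $S=e^{i\theta}T$, so that $\tfrac12\big(\langle Sx,x\rangle_A+\overline{\langle Sx,x\rangle_A}\big)=\textit{Re}\langle Sx,x\rangle_A$.

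Next I would combine this with the scalar fact $|z|=\sup_{\theta\in\mathbb{R}}\textit{Re}(e^{i\theta}z)$ for $z\in\mathbb{C}$ to get, for each $x$ with $\|x\|_A=1$,
\[ |\langle Tx,x\rangle_A| = \sup_{\theta\in\mathbb{R}}\langle \textit{Re}_A(e^{i\theta}T)x,x\rangle_A. \]
Taking the supremum over all such $x$ and interchanging the two suprema yields
\[ w_A(T) = \sup_{\theta\in\mathbb{R}}\ \sup_{\|x\|_A=1}\langle \textit{Re}_A(e^{i\theta}T)x,x\rangle_A. \]
Here I would be careful with the inner supremum: since $\textit{Re}_A(e^{i\theta}T)$ is $A$-self-adjoint, $\langle \textit{Re}_A(e^{i\theta}T)x,x\rangle_A$ is real, and replacing $\theta$ by $\theta+\pi$ reverses its sign; hence, once the outer supremum over $\theta\in\mathbb{R}$ is taken, the inner supremum may be replaced by $\sup_{\|x\|_A=1}|\langle \textit{Re}_A(e^{i\theta}T)x,x\rangle_A|=w_A(\textit{Re}_A(e^{i\theta}T))$. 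Applying Lemma \ref{lemma:Z1} then gives
\[ w_A(T)=\sup_{\theta\in\mathbb{R}}w_A(\textit{Re}_A(e^{i\theta}T))=\sup_{\theta\in\mathbb{R}}\|\textit{Re}_A(e^{i\theta}T)\|_A, \]
which is the first equality.

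For the second equality I would observe that $\textit{Im}_A(e^{i\theta}T)=\textit{Re}_A(e^{i(\theta-\pi/2)}T)$, since $\tfrac{1}{2i}=\tfrac{-i}{2}$ gives $\textit{Im}_A(S)=\textit{Re}_A(-iS)$ for any $S\in\mathcal{B}_A(\mathcal{H})$; because $\{\theta-\pi/2:\theta\in\mathbb{R}\}=\mathbb{R}$, the supremum over $\theta$ of $\|\textit{Im}_A(e^{i\theta}T)\|_A$ equals that of $\|\textit{Re}_A(e^{i\theta}T)\|_A$, which is $w_A(T)$ by the first part. The only genuine subtlety is the sign bookkeeping for the real-valued form $\langle \textit{Re}_A(e^{i\theta}T)x,x\rangle_A$ described above; the rest is direct computation, requiring nothing beyond the definitions and Lemma \ref{lemma:Z1}.
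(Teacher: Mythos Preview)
The paper does not supply a proof of this lemma; it is quoted (together with Lemmas~\ref{lemma:Z3} and~\ref{lemma:Z1}) as one of ``the following three results proved by Zamani~\cite{Z}''. So there is no in-paper argument to compare your proposal against.

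That said, your proof is correct and is exactly the standard derivation. The pointwise identity $\langle \textit{Re}_A(e^{i\theta}T)x,x\rangle_A=\textit{Re}\big(e^{i\theta}\langle Tx,x\rangle_A\big)$ is right (your justification via $AS^{\sharp_A}=S^*A$ is the one needed), the scalar fact $|z|=\sup_\theta\textit{Re}(e^{i\theta}z)$ together with the interchange of suprema gives the double supremum, and the $\theta\mapsto\theta+\pi$ sign-flip argument legitimately converts the inner supremum into $w_A(\textit{Re}_A(e^{i\theta}T))$, so that Lemma~\ref{lemma:Z1} finishes the first equality. The reduction $\textit{Im}_A(S)=\textit{Re}_A(-iS)$ and the shift $\theta\mapsto\theta-\pi/2$ handle the second equality cleanly. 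This is precisely the argument one finds in Zamani's paper, so had the authors included a proof it would match yours.
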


\noindent Next we compute B-numerical radius for some $2 \times 2$ operator matrices. First we note that the operator $T=(T_{ij})_{2 \times 2}$ is in $\mathcal{B}_B(\mathcal{H} \oplus \mathcal{H})$ if the operator $T_{ij}$  (for $i,j=1,2$) are in $\mathcal{B}_A(\mathcal{H})$ and in this case (see \cite[Lemma 3.1]{BFP}) $T^{\sharp_B}=(T_{ji}^{\sharp_A})_{2 \times 2}.$ We now prove the following lemma.  
  
\begin{lemma}\label{lemma:1}
Let $X,Y \in \mathcal{B}_A(\mathcal{H}) $. Then the following results hold:

\begin{eqnarray*}
&&(i) ~~ w_B \left(\begin{array}{cc}
X&O \\
O&Y
\end{array}\right)=\max \left \{ w_A(X), w_A(Y)\right \}.\\
&&(ii)~~ \textit{If}~~ A>0 ~~\textit{then}~~  w_B \left(\begin{array}{cc}
O&X \\
Y&O
\end{array}\right) =  w_B\left(\begin{array}{cc}
O&Y \\
X&O
\end{array}\right). \\
&&(iii) ~~ \textit{If} ~~ A>0~~ \textit{then}~~  \textit{for any}~~ \theta \in \mathbb{R},~~  
	w_B\left(\begin{array}{cc}
	O&X \\
	e^{i\theta}Y&O
	\end{array}\right) = w_B\left(\begin{array}{cc}
	O&X \\
	Y&O
	\end{array}\right).\\
&&(iv)~~ \textit{If} ~~ A>0~~ \textit{then}~~
w_B \left(\begin{array}{cc}
X&Y \\
Y&X
\end{array}\right)=\max \left \{ w_A(X+Y), w_A(X-Y)\right \}.\\ 
&& \textit{In particular}, ~~w_B \left(\begin{array}{cc}
O&Y \\
Y&O
\end{array}\right)=w_A(Y).
\end{eqnarray*}
\end{lemma}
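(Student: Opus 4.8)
The plan is to establish each part by exploiting the block structure together with the characterizations of $w_A$ supplied by Lemmas \ref{lemma:Z1} and \ref{lemma:Z2}, plus the identity $T^{\sharp_B}=(T_{ji}^{\sharp_A})_{2\times 2}$. For part $(i)$, I would compute directly from the definition: for a unit vector $(x,y)^t$ in $\mathcal{H}\oplus\mathcal{H}$ with $\|x\|_A^2+\|y\|_A^2=1$, we have $\langle \mathrm{diag}(X,Y)(x,y)^t,(x,y)^t\rangle_B = \langle Xx,x\rangle_A+\langle Yy,y\rangle_A$, so $|\langle\cdot\rangle_B|\le w_A(X)\|x\|_A^2+w_A(Y)\|y\|_A^2\le\max\{w_A(X),w_A(Y)\}$; the reverse inequality comes from taking $y=0$ (resp. $x=0$) and a near-optimal $x$ (resp. $y$). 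This part needs no assumption on $A$.

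For parts $(ii)$ and $(iii)$, the idea is to conjugate by a suitable $B$-unitary operator. When $A>0$, the matrix $V=\left(\begin{smallmatrix} O & P_A \\ P_A & O\end{smallmatrix}\right)$ (which reduces to the genuine flip since $P_A=I$ when $A>0$) is $B$-unitary, and $w_B$ is invariant under $B$-unitary conjugation because $\langle V^{\sharp_B}TVx,x\rangle_B=\langle T(Vx),Vx\rangle_B$ and $V$ preserves $\|\cdot\|_B$. Conjugating $\left(\begin{smallmatrix} O & X\\ Y & O\end{smallmatrix}\right)$ by $V$ swaps it to $\left(\begin{smallmatrix} O & Y\\ X & O\end{smallmatrix}\right)$, giving $(ii)$. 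For $(iii)$, I would instead conjugate by the $B$-unitary diagonal matrix $D_\theta=\mathrm{diag}(e^{i\theta}I, I)$ (or $\mathrm{diag}(I,e^{-i\theta}I)$): a short computation shows $D_\theta^{\sharp_B}\left(\begin{smallmatrix} O & X\\ e^{i\theta}Y & O\end{smallmatrix}\right)D_\theta = \left(\begin{smallmatrix} O & X\\ Y & O\end{smallmatrix}\right)$ up to a unimodular scalar, and since $w_B(\lambda T)=|\lambda|w_B(T)$, the claim follows. The point where $A>0$ is genuinely used is in asserting that these conjugating matrices are honestly $B$-unitary with $P_B=I$, so that no range-closure issues arise.

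For part $(iv)$, the natural move is again a unitary change of basis: with $A>0$, the matrix $W=\tfrac{1}{\sqrt2}\left(\begin{smallmatrix} I & I\\ I & -I\end{smallmatrix}\right)$ is $B$-unitary (it is the standard Hadamard-type unitary, self-adjoint and self-inverse), and one checks $W^{\sharp_B}\left(\begin{smallmatrix} X & Y\\ Y & X\end{smallmatrix}\right)W = \left(\begin{smallmatrix} X+Y & O\\ O & X-Y\end{smallmatrix}\right)$. By the invariance of $w_B$ under $B$-unitary conjugation and part $(i)$ applied to the block-diagonal form, we get $w_B\left(\begin{smallmatrix} X & Y\\ Y & X\end{smallmatrix}\right)=\max\{w_A(X+Y),w_A(X-Y)\}$. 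The "in particular" statement follows by setting $X=O$: then $\max\{w_A(Y),w_A(-Y)\}=w_A(Y)$ since $w_A(-Y)=w_A(Y)$.

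The main obstacle I anticipate is not in any single computation but in being careful about the semi-Hilbertian subtleties: verifying that the flip, the diagonal phase matrix, and the Hadamard matrix really are $B$-unitary in the sense $U^{\sharp_B}U=(U^{\sharp_B})^{\sharp_B}U^{\sharp_B}=P_B$, and that conjugation by a $B$-unitary genuinely preserves $w_B$ (one must check $Vx$ ranges over all of $\mathcal{H}\oplus\mathcal{H}$, or at least over the relevant set of $B$-unit vectors, as $x$ does). The hypothesis $A>0$ makes $\|\cdot\|_B$ a genuine norm and $P_B=I$, which is exactly what removes these difficulties; this is why parts $(ii)$–$(iv)$ carry that assumption while $(i)$ does not. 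Once the $B$-unitary invariance of $w_B$ is cleanly recorded as a preliminary observation, all four parts reduce to the one-line matrix identities indicated above.
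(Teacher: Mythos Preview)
Your approach is essentially the paper's: direct estimation for (i), and $B$-unitary conjugation (citing the invariance $w_B(U^{\sharp_B}TU)=w_B(T)$) for (ii)--(iv), with the same flip, diagonal-phase, and Hadamard-type unitaries. The only slip is in (iii): with $D_\theta=\mathrm{diag}(e^{i\theta}I,I)$ one computes $D_\theta^{\sharp_B}\left(\begin{smallmatrix}O&X\\ e^{i\theta}Y&O\end{smallmatrix}\right)D_\theta=\left(\begin{smallmatrix}O&e^{-i\theta}X\\ e^{2i\theta}Y&O\end{smallmatrix}\right)$, which is \emph{not} a unimodular multiple of $\left(\begin{smallmatrix}O&X\\ Y&O\end{smallmatrix}\right)$; the correct phase is $\mathrm{diag}(I,e^{i\theta/2}I)$ (as the paper takes) or equivalently $\mathrm{diag}(e^{-i\theta/2}I,I)$, which yields $e^{i\theta/2}\left(\begin{smallmatrix}O&X\\ Y&O\end{smallmatrix}\right)$.
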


\begin{proof}
(i) Let $T=\left(\begin{array}{cc}
X&O \\
O&Y
\end{array}\right)$ and $u=(x,y)\in \mathcal{H} \oplus \mathcal{H}$ with $\|u\|_B=1$, i.e., $\|x\|^2_A+\|y\|^2_A=1.$ Now,
\begin{eqnarray*}
	|\langle Tu,u \rangle_B| &\leq& |\langle Xx,x \rangle_A|+|\langle Yy,y \rangle_A|\\
	&\leq& w_A(X)\|x\|^2_A + w_A(Y)\|y\|^2_A \\
	&\leq& \max \left \{ w_A(X), w_A(Y)\right \}.
\end{eqnarray*} 
Taking supremum over $\|u\|_B =1$, we get 
\[w_B(T) \leq  \max \left \{ w_A(X), w_A(Y)\right \}.\]
Suppose $u = (x,0) \in  \mathcal{H} \oplus \mathcal{H} $ where $\|x\|_A =1.$ Then 
\[|\langle Tu,u\rangle_B|=|\langle AXx,x\rangle|=|\langle Xx,x\rangle_A|.\]
Taking supremum over $\|x\|_A=1$, we get
\[ \sup_{\|x\|_A=1}|\langle Tu,u\rangle_B|=w_A(X)\]
and so we have $w_B(T) \geq w_A(X).$
Similarly, if we take $v =(0,y) \in \mathcal{H} \oplus \mathcal{H}$ with $\|y\|_A=1$ then we can show that  $w_B(T) \geq w_A(Y).$ Therefore, $w_B(T) \geq \max \left \{ w_A(X), w_A(Y)\right \}.$ This completes the proof of Lemma \ref{lemma:1} (i).\\

\noindent(ii) The proof follows from the observation that $w_B(U^{\sharp_B}TU) = w_B(T)$ (see \cite[Lemma 3.8]{BFP}) if  $U$ is an $B$-unitary operator on $\mathcal{H} \oplus \mathcal{H}$, here we take $U=\left(\begin{array}{cc}
O&I \\
I&O
\end{array}\right)$.\\

\noindent(iii) As in (ii) we now take $U=\left(\begin{array}{cc}
I&O \\
O&e^\frac{i\theta}{2}I
\end{array}\right)$.\\

\noindent(iv) Let  $U= \frac{1}{\sqrt{2}}\left(\begin{array}{cc}
I&I \\
-I&I
\end{array}\right)$ and $T= \left(\begin{array}{cc}
X&Y \\
Y&X
\end{array}\right)$. Then an easy calculation we have
\[U^{\sharp_B}TU =  \left(\begin{array}{cc}
X-Y&O \\
O&X+Y
\end{array}\right).\] Using  Lemma \ref{lemma:1} (i) and $w_B(U^{\sharp_B}TU) = w_B(T)$ we get 
\[w_B(T)=\max \left \{ w_A(X+Y), w_A(X-Y)\right \}.\]
Taking $X=O$ we get 
\[w_B \left(\begin{array}{cc}
O&Y \\
Y&O
\end{array}\right)=w_A(Y).\] 
This completes the proof of Lemma \ref{lemma:1} (iv).
\end{proof}

Next we prove the following  important lemma for $A$-positive operators.

\begin{lemma}\label{lemma:5}
Let $X, Y \in \mathcal{B}_A(\mathcal{H})$ be A-positive. If $X-Y$ is A-positive then \[ \|X\|_A \geq \|Y\|_A.\]
\end{lemma}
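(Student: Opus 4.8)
The plan is to exploit the characterization of the $A$-operator seminorm for $A$-positive operators, namely $\|T\|_A = \sup\{\langle Tx,x\rangle_A : \|x\|_A = 1\}$, which is recalled in the introduction. Since $X$ and $Y$ are both $A$-positive, this formula applies to both of them, so the statement reduces to comparing two suprema of the associated quadratic forms. The key observation is that for every $x \in \mathcal{H}$ with $\|x\|_A = 1$ we have, using $A$-positivity of $X - Y$,
\[
\langle Xx,x\rangle_A - \langle Yx,x\rangle_A = \langle (X-Y)x,x\rangle_A \geq 0,
\]
so that $\langle Yx,x\rangle_A \leq \langle Xx,x\rangle_A$ pointwise on the $A$-unit sphere.

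From this pointwise inequality the result follows immediately by taking suprema: since $\langle Yx,x\rangle_A \leq \langle Xx,x\rangle_A \leq \sup_{\|z\|_A=1}\langle Xz,z\rangle_A = \|X\|_A$ for every admissible $x$, taking the supremum over all such $x$ on the left gives $\|Y\|_A = \sup_{\|x\|_A=1}\langle Yx,x\rangle_A \leq \|X\|_A$. I would spell out explicitly that all three quantities $\langle Xx,x\rangle_A$, $\langle Yx,x\rangle_A$, $\langle (X-Y)x,x\rangle_A$ are real and nonnegative because $AX$, $AY$, $A(X-Y)$ are positive operators on $\mathcal{H}$; this is what lets us drop the absolute value bars that appear in the general definition of $w_A$ and instead work directly with the seminorm formula.

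The only genuinely delicate point — and the step I would treat most carefully — is the legitimacy of the seminorm formula itself: it is stated in the introduction for $A$-positive operators, and one must make sure $X$, $Y$, and $X-Y$ all qualify. That $X$ and $Y$ are $A$-positive is a hypothesis; that $X - Y$ is $A$-positive is also a hypothesis. One small thing worth noting is that $A$-positivity already entails membership in $\mathcal{B}_A(\mathcal{H})$ in the setting of the paper (an $A$-positive operator is in particular $A$-self-adjoint, hence in $\mathcal{B}_A(\mathcal{H})$), so there is no issue applying the seminorm identity. After that caveat, the argument is a two-line monotonicity argument: pointwise domination of quadratic forms on the $A$-unit sphere, followed by taking suprema. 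I do not anticipate any real obstacle; the content of the lemma is essentially the order-monotonicity of the norm on $A$-positive operators, and the proof is correspondingly short.
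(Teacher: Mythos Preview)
Your proposal is correct and follows essentially the same approach as the paper: derive the pointwise inequality $\langle Yx,x\rangle_A \leq \langle Xx,x\rangle_A$ from $A$-positivity of $X-Y$, then take suprema over the $A$-unit sphere. The only cosmetic difference is that the paper phrases the supremum step via $w_A$ and then invokes Lemma~\ref{lemma:Z3} (for $A$-self-adjoint operators $w_A=\|\cdot\|_A$), whereas you appeal directly to the seminorm formula for $A$-positive operators stated in the introduction; these are equivalent.
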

\begin{proof}
From the definition of A-positive operator we have , for all $x\in \mathcal{H}$
\begin{eqnarray*}
	&&\langle (X-Y)x,x\rangle_A \geq0\\
	&\Rightarrow & \langle Xx,x\rangle_A  \geq \langle Yx,x\rangle_A\\
	&\Rightarrow & w_A(X) \geq \langle Yx,x\rangle_A. 
\end{eqnarray*}
Taking supremum over $\|x\|_A=1,$ we get
\[w_A(X) \geq w_A(Y).\]
Since $X,Y$ are A-self-adjoint operators, so $ \|X\|_A \geq \|Y\|_A$.
\end{proof}

We are now in a position to prove the following theorem.

\begin{theorem} \label{theorem:1}
Let $X,Y \in \mathcal{B}_A(\mathcal{H})$. Then
\begin{eqnarray*}
	w^{2}_B\left(\begin{array}{cc}
		O&X \\
		Y&O
	\end{array}\right)  &\geq& \frac{1}{4}\max \big\{\| XX^{\sharp_A}+Y^{\sharp_A}Y\|_A, \| X^{\sharp_A}X+YY^{\sharp_A}\|_A\big\} ~~\mbox{and} \\ 
	w^{2}_B\left(\begin{array}{cc}
		O&X \\
		Y&O
	\end{array}\right) 
	&\leq& \frac{1}{2}\max \big\{\| XX^{\sharp_A}+Y^{\sharp_A}Y\|_A, \| X^{\sharp_A}X+YY^{\sharp_A}\|_A\big\}. 
\end{eqnarray*}
\end{theorem}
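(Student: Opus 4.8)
The plan is to combine Lemma~\ref{lemma:Z2} with the structure of the off-diagonal matrix $T=\left(\begin{smallmatrix} O&X\\ Y&O\end{smallmatrix}\right)$, together with Lemma~\ref{lemma:1}(i) and (iv) and the crude fact that $\|Z^{\sharp_B}Z\|_B=\|Z\|_B^2$. First I would compute $T^{\sharp_B}=\left(\begin{smallmatrix} O&Y^{\sharp_A}\\ X^{\sharp_A}&O\end{smallmatrix}\right)$ using the formula $T^{\sharp_B}=(T_{ji}^{\sharp_A})_{2\times 2}$ recalled before Lemma~\ref{lemma:1}. Then $TT^{\sharp_B}=\left(\begin{smallmatrix} XX^{\sharp_A}&O\\ O&YY^{\sharp_A}\end{smallmatrix}\right)$ and $T^{\sharp_B}T=\left(\begin{smallmatrix} Y^{\sharp_A}Y&O\\ O&X^{\sharp_A}X\end{smallmatrix}\right)$, so by Lemma~\ref{lemma:1}(i),
\[
\|T\|_B^2=\|T^{\sharp_B}T\|_B=\max\{\|Y^{\sharp_A}Y\|_A,\|X^{\sharp_A}X\|_A\}=\max\{\|X\|_A^2,\|Y\|_A^2\}.
\]
For the upper bound, I would write $2\,\textit{Re}_B(e^{i\theta}T)=e^{i\theta}T+e^{-i\theta}T^{\sharp_B}$ and estimate $\|\textit{Re}_B(e^{i\theta}T)\|_B^2\le \tfrac14\|(e^{i\theta}T+e^{-i\theta}T^{\sharp_B})^{\sharp_B}(e^{i\theta}T+e^{-i\theta}T^{\sharp_B})\|_B$; expanding the product gives $T^{\sharp_B}T+TT^{\sharp_B}$ plus cross terms $e^{2i\theta}(T^{\sharp_B})^2+e^{-2i\theta}T^2$, and since $T^{\sharp_B}T+TT^{\sharp_B}=\left(\begin{smallmatrix} Y^{\sharp_A}Y+XX^{\sharp_A}&O\\ O&X^{\sharp_A}X+YY^{\sharp_A}\end{smallmatrix}\right)$ is $B$-positive, a triangle-inequality-type argument (dropping the oscillating terms via $\|T^2\|_B\le\|T\|_B^2\le\|T^{\sharp_B}T+TT^{\sharp_B}\|_B$ and $A$-positivity, cf. Lemma~\ref{lemma:5}) bounds everything by $\tfrac12\max\{\|XX^{\sharp_A}+Y^{\sharp_A}Y\|_A,\|X^{\sharp_A}X+YY^{\sharp_A}\|_A\}$; taking $\sup_\theta$ and invoking Lemma~\ref{lemma:Z2} yields the claimed upper estimate.

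For the lower bound, the key observation is that the $B$-positive matrix $\left(\begin{smallmatrix} XX^{\sharp_A}+Y^{\sharp_A}Y&O\\ O&X^{\sharp_A}X+YY^{\sharp_A}\end{smallmatrix}\right)$ equals $TT^{\sharp_B}+T^{\sharp_B}T$, and I would like to show its $B$-norm is at most $4w_B(T)^2$. Using the parallelogram-type identity $(e^{i\theta}T+e^{-i\theta}T^{\sharp_B})^{\sharp_B}(e^{i\theta}T+e^{-i\theta}T^{\sharp_B})+(e^{i\theta}T-e^{-i\theta}T^{\sharp_B})^{\sharp_B}(e^{i\theta}T-e^{-i\theta}T^{\sharp_B})=2(T^{\sharp_B}T+TT^{\sharp_B})$, apply it to a unit vector $u$ with $\|u\|_B=1$; then $\langle(T^{\sharp_B}T+TT^{\sharp_B})u,u\rangle_B=\|\textit{Re}_B(e^{i\theta}T)u\|_B^2+\|\textit{Im}_B(e^{i\theta}T)u\|_B^2$ up to constants, and choosing $\theta$ appropriately one of the two $B$-self-adjoint operators $\textit{Re}_B(e^{i\theta}T)$, $\textit{Im}_B(e^{i\theta}T)$ has $B$-numerical radius (= $B$-norm, by Lemma~\ref{lemma:Z1}) at least $\tfrac{1}{\sqrt2}$ times the square root of $\langle(T^{\sharp_B}T+TT^{\sharp_B})u,u\rangle_B$; since by Lemma~\ref{lemma:Z2} both are $\le w_B(T)$, we get $\langle(T^{\sharp_B}T+TT^{\sharp_B})u,u\rangle_B\le 4w_B(T)^2$ after taking the sup over $u$, and finally the $B$-positivity lets us replace the supremum of $\langle\cdot u,u\rangle_B$ by the $B$-norm, which by Lemma~\ref{lemma:1}(i) is exactly $\max\{\|XX^{\sharp_A}+Y^{\sharp_A}Y\|_A,\|X^{\sharp_A}X+YY^{\sharp_A}\|_A\}$.

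I expect the main obstacle to be the handling of the cross terms $e^{2i\theta}(T^{\sharp_B})^2+e^{-2i\theta}T^2$ in the upper bound: one must argue that these can be absorbed without destroying the factor $\tfrac12$, which is precisely where one needs that $T^2$ and $(T^{\sharp_B})^2$ have $B$-norm controlled by $\|T\|_B^2$ and hence by the $B$-positive sum $T^{\sharp_B}T+TT^{\sharp_B}$ — a step that implicitly uses Lemma~\ref{lemma:5} and the block-diagonal computation above. The lower bound is comparatively clean once one commits to testing the parallelogram identity on vectors and optimizing over $\theta$, but care is needed to pass from the pointwise inequality back to the operator $B$-norm, which works only because the relevant operator is $B$-positive (so its $B$-norm is attained as a supremum of $\langle\cdot u,u\rangle_B$).
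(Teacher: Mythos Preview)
Your lower-bound argument is essentially the paper's, just phrased pointwise: the paper writes the operator identity $H_\theta^2+K_\theta^2=\tfrac12(T^{\sharp_B}T+TT^{\sharp_B})$ (which is your parallelogram identity) and then takes $B$-norms directly, using $\|H_\theta^2+K_\theta^2\|_B\le \|H_\theta\|_B^2+\|K_\theta\|_B^2\le 2w_B^2(T)$. Your detour through ``choose $\theta$ so that one of $H_\theta,K_\theta$ is large'' is unnecessary, since \emph{both} $\|H_\theta\|_B$ and $\|K_\theta\|_B$ are already $\le w_B(T)$ for every $\theta$ by Lemma~\ref{lemma:Z2}.

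The upper-bound argument, however, has a genuine gap. After expanding $4H_\theta^2 = T^{\sharp_B}T+TT^{\sharp_B} + e^{2i\theta}T^2+e^{-2i\theta}(T^{\sharp_B})^2$, you propose to bound the cross terms in norm via the triangle inequality together with $\|T^2\|_B\le\|T\|_B^2\le\|T^{\sharp_B}T+TT^{\sharp_B}\|_B$. But that chain only yields
\[
\|4H_\theta^2\|_B \;\le\; \|T^{\sharp_B}T+TT^{\sharp_B}\|_B + 2\|T\|_B^2 \;\le\; 3\,\|T^{\sharp_B}T+TT^{\sharp_B}\|_B,
\]
i.e.\ $\|H_\theta\|_B^2\le \tfrac34\|T^{\sharp_B}T+TT^{\sharp_B}\|_B$, not $\tfrac12$. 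The missing idea is that the oscillating part is \emph{exactly} $(T^{\sharp_B}T+TT^{\sharp_B})-4K_\theta^2$, so at the operator level
\[
4H_\theta^2 \;=\; 2\bigl(T^{\sharp_B}T+TT^{\sharp_B}\bigr) - 4K_\theta^2 \;\le\; 2\bigl(T^{\sharp_B}T+TT^{\sharp_B}\bigr),
\]
the last step because $K_\theta^2$ is $B$-positive. Now Lemma~\ref{lemma:5} (applied to the $B$-positive operators $H_\theta^2$ and $\tfrac12(T^{\sharp_B}T+TT^{\sharp_B})$) gives $\|H_\theta\|_B^2=\|H_\theta^2\|_B\le \tfrac12\|T^{\sharp_B}T+TT^{\sharp_B}\|_B$, and the supremum over $\theta$ finishes. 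In short: do not estimate the cross terms in norm; absorb them \emph{as operators} into $-K_\theta^2$ first, and only then pass to norms.
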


\begin{proof}
 Let $T=\left(\begin{array}{cc}
O&X \\
Y&O
\end{array}\right)$, $H_{\theta}=\textit{Re}_A(e^{i\theta}T)$ and $K_{\theta}=\textit{Im}_A(e^{i\theta}T).$ 
Then from an easy calculation we have, 
\[H^2_{\theta}+K^2_{\theta}=\frac{1}{2}\left(\begin{array}{cc}
M&O \\
O&N
\end{array}\right)\] where $M=XX^{\sharp_A}+Y^{\sharp_A}Y$, $N=X^{\sharp_A}X+YY^{\sharp_A}$.\\
Taking norm on both sides and then using Lemma \ref{lemma:Z2}, we get 
 \[\frac{1}{2}\left\|\left(\begin{array}{cc}
M&O \\
O&N
\end{array}\right)\right\|_B=\|H^2_{\theta}+K^2_{\theta}\|_B\leq \|H_{\theta}\|^2_B+\|K_{\theta}\|^2_B\leq 2w^2_B(T).\]
Therefore we get, 
\[\frac{1}{2}\max \big\{\| M\|_A, \| N\|_A \big\}\leq 2w^2_B(T).\]
This completes the proof of the first inequality.\\
Again, from $H^2_{\theta}+K^2_{\theta}=\frac{1}{2}\left(\begin{array}{cc}
M&O \\
O&N
\end{array}\right) $~~\mbox{we have,}  $H^2_{\theta}-\frac{1}{2}\left(\begin{array}{cc}
M&O \\
O&N
\end{array}\right)=-K^2_{\theta}\leq 0.$
Therefore, $ H^2_{\theta}\leq\frac{1}{2}\left(\begin{array}{cc}
M&O \\
O&N
\end{array}\right)$. Using Lemma \ref{lemma:5}, we get  
\[\|H_{\theta}\|^2_B\leq\frac{1}{2}\left\|\left(\begin{array}{cc}
M&O \\
O&N
\end{array}\right)\right\|_B=\frac{1}{2}\max \big\{\| M\|_A, \|N\|_A\big\}.\]
Taking supremum over $\theta\in \mathbb{R}$, we get 
\[w^{2}_B(T) \leq \frac{1}{2}\max \big\{\| M\|_A, \|N\|_A\big\}.\] This completes the proof of the second inequality of the theorem. 
\end{proof}

Next we state the corollary, the proof of which follows easily by considering $X=Y=T$ and $A>0$ in Theorem \ref{theorem:1}.
\begin{cor}\label{corollary}
Let $T\in \mathcal{B}_A(\mathcal{H})$ and $A>0.$	Then 
\[\frac{1}{4}\| TT^{\sharp_A}+T^{\sharp_A}T\|_A \leq w^2_A(T) \leq \frac{1}{2}\| TT^{\sharp_A}+T^{\sharp_A}T\|_A.\]
\end{cor}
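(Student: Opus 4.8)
The plan is simply to specialize Theorem~\ref{theorem:1} to the case $X=Y=T$. With this choice the two operators appearing in that theorem, namely $M=XX^{\sharp_A}+Y^{\sharp_A}Y$ and $N=X^{\sharp_A}X+YY^{\sharp_A}$, both reduce to $TT^{\sharp_A}+T^{\sharp_A}T$, so the maximum on the right-hand sides is redundant and equals $\|TT^{\sharp_A}+T^{\sharp_A}T\|_A$. Since $T\in\mathcal{B}_A(\mathcal{H})$, the remark preceding Lemma~\ref{lemma:1} guarantees that $\left(\begin{smallmatrix}O&T\\T&O\end{smallmatrix}\right)\in\mathcal{B}_B(\mathcal{H}\oplus\mathcal{H})$, so Theorem~\ref{theorem:1} is applicable and yields
\[
\frac{1}{4}\|TT^{\sharp_A}+T^{\sharp_A}T\|_A\ \leq\ w^2_B\left(\begin{array}{cc}O&T\\T&O\end{array}\right)\ \leq\ \frac{1}{2}\|TT^{\sharp_A}+T^{\sharp_A}T\|_A.
\]

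Next I would invoke the ``In particular'' clause of Lemma~\ref{lemma:1}(iv), which is valid precisely because we assume $A>0$; it gives $w_B\left(\begin{smallmatrix}O&T\\T&O\end{smallmatrix}\right)=w_A(T)$, and hence $w^2_B\left(\begin{smallmatrix}O&T\\T&O\end{smallmatrix}\right)=w^2_A(T)$. Substituting this equality into the displayed chain of inequalities immediately produces
\[
\frac{1}{4}\|TT^{\sharp_A}+T^{\sharp_A}T\|_A\ \leq\ w^2_A(T)\ \leq\ \frac{1}{2}\|TT^{\sharp_A}+T^{\sharp_A}T\|_A,
\]
which is exactly the assertion of the corollary.

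Because the argument is just a substitution together with a single application of Lemma~\ref{lemma:1}(iv), there is essentially no obstacle here; the only points that deserve a line of care are the bookkeeping observation that $M$ and $N$ coincide when $X=Y$ (so the maximum collapses) and the reminder that the reduction from $w_B$ to $w_A$ via Lemma~\ref{lemma:1}(iv) requires the standing hypothesis $A>0$, which is exactly why $A>0$ is included in the statement of the corollary.
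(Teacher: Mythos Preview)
Your proposal is correct and follows exactly the paper's approach: the paper states that the corollary ``follows easily by considering $X=Y=T$ and $A>0$ in Theorem~\ref{theorem:1},'' which is precisely your specialization together with the (implicit) use of Lemma~\ref{lemma:1}(iv) to pass from $w_B$ to $w_A$. Your write-up simply makes explicit the collapse $M=N=TT^{\sharp_A}+T^{\sharp_A}T$ and the role of the hypothesis $A>0$, so there is nothing to add.
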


\begin{remark}\label{remark:1}
(i) Kittaneh \cite[Th. 1]{K} proved that if $T\in \mathcal{B}(\mathcal{H})$ then	
\[\frac{1}{4}\| TT^{*}+T^{*}T\| \leq w^2(T) \leq \frac{1}{2}\| TT^{*}+T^{*}T\|,\]	
which follows easily from  Corollary \ref{corollary} by taking $A=I.$\\
(ii)  Zamani \cite[Th. $2.10$]{Z} proved that 	
	\[  w^2_A(T) \leq \frac{1}{2}\| TT^{\sharp_A}+T^{\sharp_A}T\|_A,\]
which clearly follows from the   inequality obtained in Corollary \ref{corollary}.  
\end{remark}

Next we prove the following theorem. 
  
\begin{theorem} \label{theorem:2}
Let $X, Y \in \mathcal{B}_A(\mathcal{H})$. Then\\
$w^{4}_B\left(\begin{array}{cc}
O&X \\
Y&O
\end{array}\right)  \geq  \frac{1}{16}\max \big\{\| P\|_A, \| Q\|_A\big\} ~\mbox{and}$ 
\begin{eqnarray*}
w^{4}_B\left(\begin{array}{cc}
O&X \\
Y&O
\end{array}\right) \leq \frac{1}{8}\max \Big\{\| XX^{\sharp_A}+Y^{\sharp_A}Y\|^2_A+4w^2_A(XY),\\
~~~~ \| X^{\sharp_A}X+YY^{\sharp_A}\|^2_A  + 4w^2_A(YX)\Big\},
\end{eqnarray*}
where $P=(XX^{\sharp_A}+Y^{\sharp_A}Y)^2+4(\textit{Re}_A(XY))^2,Q=(X^{\sharp_A}X+YY^{\sharp_A})^2+4(\textit{Re}_A(YX))^2.$
\end{theorem}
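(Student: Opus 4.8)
The plan is to mimic the structure of the proof of Theorem~\ref{theorem:1}, but now working one level deeper: instead of estimating $\|H_\theta\|_B^2$ directly, I will estimate $\|H_\theta\|_B^4 = \|H_\theta^2\|_B^2$ by computing $H_\theta^2$ explicitly as a $2\times 2$ diagonal operator matrix and then applying the known bounds for $A$-numerical radius and $A$-operator seminorm. Write $T=\left(\begin{smallmatrix} O&X\\ Y&O\end{smallmatrix}\right)$, $H_\theta=\textit{Re}_A(e^{i\theta}T)$, $K_\theta=\textit{Im}_A(e^{i\theta}T)$. A direct computation (using $T^{\sharp_B}=\left(\begin{smallmatrix} O&Y^{\sharp_A}\\ X^{\sharp_A}&O\end{smallmatrix}\right)$) gives $H_\theta = \frac12\left(\begin{smallmatrix} O & e^{i\theta}X + e^{-i\theta}Y^{\sharp_A}\\ e^{i\theta}Y + e^{-i\theta}X^{\sharp_A} & O\end{smallmatrix}\right)$, so $H_\theta^2$ is block-diagonal with diagonal entries $\tfrac14(e^{i\theta}X+e^{-i\theta}Y^{\sharp_A})(e^{i\theta}Y+e^{-i\theta}X^{\sharp_A})$ and $\tfrac14(e^{i\theta}Y+e^{-i\theta}X^{\sharp_A})(e^{i\theta}X+e^{-i\theta}Y^{\sharp_A})$. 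Expanding the first entry: $\tfrac14\big(XX^{\sharp_A} + Y^{\sharp_A}Y + e^{2i\theta}XY + e^{-2i\theta}Y^{\sharp_A}X^{\sharp_A}\big) = \tfrac14\big(XX^{\sharp_A}+Y^{\sharp_A}Y\big) + \tfrac12\,\textit{Re}_A\!\big(e^{2i\theta}XY\big)$, since $Y^{\sharp_A}X^{\sharp_A} = (XY)^{\sharp_A}$. Thus $H_\theta^2 = \left(\begin{smallmatrix} C_\theta & O\\ O & D_\theta\end{smallmatrix}\right)$ where $C_\theta = \tfrac14 M + \tfrac12\textit{Re}_A(e^{2i\theta}XY)$ with $M = XX^{\sharp_A}+Y^{\sharp_A}Y$, and similarly $D_\theta = \tfrac14 N + \tfrac12\textit{Re}_A(e^{2i\theta}YX)$ with $N = X^{\sharp_A}X + YY^{\sharp_A}$.

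For the upper bound, by Lemma~\ref{lemma:Z2} we have $w_B(T) = \sup_\theta \|H_\theta\|_B$, so $w_B^4(T) = \sup_\theta \|H_\theta^2\|_B^2 = \sup_\theta \max\{\|C_\theta\|_A^2, \|D_\theta\|_A^2\}$ using Lemma~\ref{lemma:1}(i). For each $\theta$ I estimate, using the triangle inequality and then $\|R\|_A\le$ (something) together with the fact that $XX^{\sharp_A}+Y^{\sharp_A}Y$ is $A$-positive so $\|\textit{Re}_A(e^{2i\theta}XY)\|_A \le w_A(\textit{Re}_A(e^{2i\theta}XY))$, actually more carefully: expand $\|C_\theta\|_A^2 \le \big(\tfrac14\|M\|_A + \tfrac12\|\textit{Re}_A(e^{2i\theta}XY)\|_A\big)^2$ — but this does not directly give the claimed form. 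Instead, the sharper route is to note $C_\theta = \tfrac14 M + \tfrac12\textit{Re}_A(e^{2i\theta}XY)$ and use that for $A$-selfadjoint $C_\theta$ one has $\|C_\theta\|_A = w_A(C_\theta) \le \tfrac14 w_A(M) + \tfrac12 w_A(\textit{Re}_A(e^{2i\theta}XY))$; since $M\ge_A 0$, $w_A(M)=\|M\|_A$, and $w_A(\textit{Re}_A(e^{2i\theta}XY)) \le w_A(XY)$ by Lemma~\ref{lemma:Z2}. Squaring gives $\|C_\theta\|_A^2 \le \tfrac{1}{16}\|M\|_A^2 + \tfrac14 \|M\|_A\, w_A(XY) + \tfrac14 w_A^2(XY)$. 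Hmm, to land exactly on $\tfrac18\big(\|M\|_A^2 + 4w_A^2(XY)\big) = \tfrac18\|M\|_A^2 + \tfrac12 w_A^2(XY)$ I should instead apply the elementary inequality $(a+b)^2 \le 2a^2 + 2b^2$ to $\|C_\theta\|_A^2 \le \big(\tfrac14\|M\|_A + \tfrac12 w_A(XY)\big)^2 \le 2\cdot\tfrac{1}{16}\|M\|_A^2 + 2\cdot\tfrac14 w_A^2(XY) = \tfrac18\|M\|_A^2 + \tfrac12 w_A^2(XY) = \tfrac18\big(\|M\|_A^2 + 4w_A^2(XY)\big)$. Taking the max over the two diagonal blocks and then the supremum over $\theta$ yields the stated upper bound.

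For the lower bound, I use $w_B^4(T) \ge \|H_\theta^2\|_B^2 = \max\{\|C_\theta\|_A^2,\|D_\theta\|_A^2\}$ for the single convenient choice $\theta = 0$, giving $C_0 = \tfrac14 M + \tfrac12\textit{Re}_A(XY) = \tfrac14\big(M + 2\textit{Re}_A(XY)\big)$. Then $\|C_0\|_A^2 = \tfrac{1}{16}\|M + 2\textit{Re}_A(XY)\|_A^2$. Since $C_0$ is $A$-selfadjoint, $C_0^2$ is $A$-positive with $\|C_0^2\|_A = \|C_0\|_A^2$, and $C_0^2 = \tfrac{1}{16}\big(M + 2\textit{Re}_A(XY)\big)^2 = \tfrac{1}{16}\big(M^2 + 2M\textit{Re}_A(XY) + 2\textit{Re}_A(XY)M + 4(\textit{Re}_A(XY))^2\big)$; this is not obviously $\ge_A \tfrac{1}{16}P$ where $P = M^2 + 4(\textit{Re}_A(XY))^2$ unless $M\textit{Re}_A(XY) + \textit{Re}_A(XY)M \ge_A 0$, which need not hold. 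The fix is to observe that for any $A$-selfadjoint $R,S$ one has $(R+S)^2 + (R-S)^2 = 2R^2 + 2S^2$, hence $\max\{\|(R+S)^2\|_A, \|(R-S)^2\|_A\} \ge \tfrac12\|2R^2+2S^2\|_A \ge \|R^2 + S^2\|_A$... more simply, apply the trick with $\theta$ and $\theta + \tfrac\pi2$: $\textit{Re}_A(e^{2i\theta}XY)$ becomes $-\textit{Re}_A(e^{2i\theta}XY)$, so averaging $C_\theta^2$ and $C_{\theta+\pi/2}^2$ gives $\tfrac{1}{16}M^2 + \tfrac{1}{4}(\textit{Re}_A(e^{2i\theta}XY))^2 \le_A \tfrac12(C_\theta^2 + C_{\theta+\pi/2}^2)$. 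Taking $\theta$ to (nearly) attain $w_A(\textit{Re}_A(XY)) = \|\textit{Re}_A(XY)\|_A$ in the second term (note $(\textit{Re}_A(XY))^2$ is maximized at $\theta=0$ among $e^{2i\theta}$-rotations precisely when... actually $\|(\textit{Re}_A(e^{2i\theta}XY))^2\|_A = \|\textit{Re}_A(e^{2i\theta}XY)\|_A^2$ and $\sup_\theta$ of this is $w_A^2(XY)$ — but $P$ has $(\textit{Re}_A(YX))^2$ at fixed phase, not the sup), I get $\tfrac1{16}\|M^2 + 4(\textit{Re}_A XY)^2\|_A = \tfrac1{16}\|P\|_A \le \max\{\|C_\theta^2\|_A, \|C_{\theta+\pi/2}^2\|_A\} \le w_B^4(T)$ using the $A$-positivity comparison of Lemma~\ref{lemma:5} applied to $\tfrac12(C_\theta^2+C_{\theta+\pi/2}^2) - \big(\tfrac1{16}M^2 + \tfrac14(\textit{Re}_A XY)^2\big) \ge_A 0$; the analogous computation with $D_\theta$ handles $Q$, and taking the max finishes it.

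The main obstacle is the lower bound: one must be careful that the cross terms $M\,\textit{Re}_A(XY) + \textit{Re}_A(XY)\,M$ appearing in $C_\theta^2$ are not controlled in $A$-positivity, so the naive expansion of a single $\|C_0\|_A^2$ fails; the resolution is the averaging identity $C_\theta^2 + C_{\theta+\pi/2}^2 = \tfrac18 M^2 + \tfrac12(\textit{Re}_A(e^{2i\theta}XY))^2$ (up to the right constant), which kills the cross terms, followed by an application of Lemma~\ref{lemma:5} to pass from an $A$-positive operator inequality to a seminorm inequality, and Lemma~\ref{lemma:1}(i) to split the block-diagonal seminorm into a maximum. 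One also needs to double-check that $\|\textit{Re}_A(e^{2i\theta}XY)\|_A^2 = \|(\textit{Re}_A(e^{2i\theta}XY))^2\|_A$, which holds since $\textit{Re}_A$ of anything is $A$-selfadjoint, and to confirm the definition $P = M^2 + 4(\textit{Re}_A(XY))^2$ matches what the averaging produces at the appropriate phase.
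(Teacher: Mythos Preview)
Your approach is correct and is essentially the paper's proof rewritten block-wise: since $K_\theta=H_{\theta-\pi/2}$, your averaging identity $C_\theta^2+C_{\theta+\pi/2}^2=\tfrac18\big(M^2+4(\textit{Re}_A(e^{2i\theta}XY))^2\big)$ is exactly the first diagonal block of the paper's formula $H_\theta^4+K_\theta^4=\tfrac18\left(\begin{smallmatrix}P_0&O\\O&Q_0\end{smallmatrix}\right)$, and your lower-bound argument (triangle inequality on $\|C_0^2+C_{\pi/2}^2\|_A$, then bound each $\|C_\psi\|_A^2\le w_B^4(T)$) reproduces the paper's $\|H_\theta^4+K_\theta^4\|_B\le 2w_B^4(T)$ at $\theta=0$. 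The only genuine difference is in the upper bound: the paper uses the $B$-positivity $H_\theta^4\le_B H_\theta^4+K_\theta^4$ together with Lemma~\ref{lemma:5} and then the triangle inequality on $\|P_0\|_A$, whereas you take the triangle inequality on $\|C_\theta\|_A$ first and then apply $(a+b)^2\le 2a^2+2b^2$; both reach the same bound, yours being slightly more elementary since it avoids the operator-inequality step. (Your invocation of Lemma~\ref{lemma:5} in the lower bound is unnecessary, since $\tfrac12(C_0^2+C_{\pi/2}^2)=\tfrac1{16}P$ is an equality and only the triangle inequality is needed.)
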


\begin{proof} Let $T=\left(\begin{array}{cc}
O&X \\
Y&O
\end{array}\right)$, $H_{\theta}=\textit{Re}_A(e^{i\theta}T)$ and $K_{\theta}=\textit{Im}_A(e^{i\theta}T).$
 Then we get,
\[H^4_{\theta}+K^4_{\theta}=\frac{1}{8}\left(\begin{array}{cc}
P_0&O \\
O&Q_0
\end{array}\right).\]
where $P_0=(XX^{\sharp_A}+Y^{\sharp_A}Y)^2+4(\textit{Re}_A(e^{2i\theta}XY))^2$, $Q_0=(X^{\sharp_A}X+YY^{\sharp_A})^2+4(\textit{Re}_A(e^{2i\theta}YX))^2.$
Taking norm on both sides and using Lemma \ref{lemma:Z2}, we get \\ $\frac{1}{8} \left \|\left(\begin{array}{cc}
P_0&O \\
O&Q_0
\end{array}\right)\right \|_B=\|H^4_{\theta}+K^4_{\theta}\|_B\leq \|H_{\theta}\|^4_B+\|K_{\theta}\|^4_B\leq 2w^4_B(T).$\\  Therefore we get,
\[\frac{1}{8}\max \big\{\| P_0\|_A, \| Q_0\|_A\big\}\leq 2w^4_B(T).\] 
This holds for all $\theta \in \mathbb{R}$, so taking $\theta=0$ we get,\\
$\frac{1}{8}\max \big\{\| P\|_A, \| Q\|_A\big\} \leq 2w^{4}_B(T).$ 
This completes the proof of the first inequality of the theorem.

\noindent Again, from $H^4_{\theta}+K^4_{\theta}=\frac{1}{8}\left(\begin{array}{cc}
P_0&O \\
O&Q_0
\end{array}\right)$ we have,
$H^4_{\theta}-\frac{1}{8}\left(\begin{array}{cc}
P_0&O \\
O&Q_0
\end{array}\right)=-K^4_{\theta}\leq 0.$
Therefore, $ H^4_{\theta}\leq\frac{1}{8}\left(\begin{array}{cc}
P_0&O \\
O&Q_0
\end{array}\right).$\\
Using Lemma \ref{lemma:5}, we get
\[\|H_{\theta}\|_B^4\leq\frac{1}{8}\left\|\left(\begin{array}{cc}
P_0&O \\
O&Q_0
\end{array}\right)\right\|_B=\frac{1}{8}\max \big\{\| P_0\|_A, \| Q_0\|_A\big\}.\]
Therefore using Lemma \ref{lemma:Z2}, we get
\[\|H_{\theta}\|_B^4\leq\frac{1}{8}\max  \big\{\| XX^{\sharp_A}+Y^{\sharp_A}Y\|^2_A+4w^2_A(XY), \| X^{\sharp_A}X+YY^{\sharp_A}\|^2_A+4w^2_A(YX)\big\}.\] 
Taking supremum over $\theta\in \mathbb{R}$ and using Lemma \ref{lemma:Z2}, we get 
\[w^{4}_B(T) \leq \frac{1}{8}\max  \big\{\| XX^{\sharp_A}+Y^{\sharp_A}Y\|^2_A+4w^2_A(XY), \| X^{\sharp_A}X+YY^{\sharp_A}\|^2_A+4w^2_A(YX)\big\}.\] This completes the proof of the second inequality of the theorem. 
\end{proof}

Now, taking $X=Y=T$ (say) and $A>0$ in the above Theorem  \ref{theorem:2}, we get the following inequality. 
\begin{cor}\label{cor1}
Let $T\in B_A(H)$ where $A>0$. Then
\begin{eqnarray*}
	\frac{1}{16}\|(TT^{\sharp_A}+T^{\sharp_A}T)^2+4(\textit{Re}_A(T^2))^2\|_A &\leq & w^4_A(T)\\
	&\leq & \frac{1}{8}\| TT^{\sharp_A}+T^{\sharp_A}T\|^2_A+\frac{1}{2}w^2_A(T^2). 
\end{eqnarray*}
\end{cor}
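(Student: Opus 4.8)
The plan is to obtain Corollary \ref{cor1} as the special case $X=Y=T$ of Theorem \ref{theorem:2}, with the extra hypothesis $A>0$ used only to identify the $B$-numerical radius of the off-diagonal operator matrix with $w_A(T)$. Concretely, I would first recall from the ``in particular'' clause of Lemma \ref{lemma:1}(iv) that, since $A>0$,
\[
w_B\left(\begin{array}{cc} O & T \\ T & O \end{array}\right)=w_A(T),
\]
so that both inequalities of Theorem \ref{theorem:2} applied with $X=Y=T$ become statements about $w_A^4(T)$.

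For the lower bound, I would substitute $X=Y=T$ into the operators $P$ and $Q$ of Theorem \ref{theorem:2}. Since $XX^{\sharp_A}+Y^{\sharp_A}Y$ and $X^{\sharp_A}X+YY^{\sharp_A}$ both collapse to $TT^{\sharp_A}+T^{\sharp_A}T$, and $\textit{Re}_A(XY)=\textit{Re}_A(YX)=\textit{Re}_A(T^2)$, we get $P=Q=(TT^{\sharp_A}+T^{\sharp_A}T)^2+4(\textit{Re}_A(T^2))^2$, whence $\max\{\|P\|_A,\|Q\|_A\}$ is a single term. Plugging in the identity above then yields
\[
\tfrac{1}{16}\bigl\|(TT^{\sharp_A}+T^{\sharp_A}T)^2+4(\textit{Re}_A(T^2))^2\bigr\|_A \le w_A^4(T).
\]

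For the upper bound, the same substitution in the right-hand side of the second inequality of Theorem \ref{theorem:2} turns both arguments of the maximum into $\|TT^{\sharp_A}+T^{\sharp_A}T\|_A^2+4w_A^2(XY)=\|TT^{\sharp_A}+T^{\sharp_A}T\|_A^2+4w_A^2(T^2)$ (using $w_A(XY)=w_A(YX)=w_A(T^2)$), so the maximum is again trivial; distributing the factor $\tfrac18$ gives $w_A^4(T)\le \tfrac18\|TT^{\sharp_A}+T^{\sharp_A}T\|_A^2+\tfrac12 w_A^2(T^2)$, which is the claimed estimate. I do not expect any genuine obstacle here: the entire argument is algebraic substitution into Theorem \ref{theorem:2} together with the collapse of the maxima, and the only place strict positivity of $A$ is needed is Lemma \ref{lemma:1}(iv), which provides the identification $w_B\!\left(\begin{smallmatrix} O & T \\ T & O \end{smallmatrix}\right)=w_A(T)$.
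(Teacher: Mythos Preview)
Your proposal is correct and is exactly the approach taken in the paper: the corollary is obtained by setting $X=Y=T$ in Theorem \ref{theorem:2} and using the particular case of Lemma \ref{lemma:1}(iv) (which requires $A>0$) to identify $w_B\left(\begin{smallmatrix} O & T \\ T & O \end{smallmatrix}\right)$ with $w_A(T)$. Your observation that $P=Q$ and that both arguments of the maximum in the upper bound coincide is precisely the algebraic simplification the paper has in mind.
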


\begin{remark}
(i) In \cite[Th. 2.11]{BBP1}	we proved that if $T\in \mathcal{B}(\mathcal{H})$ then
\begin{eqnarray*}
	\frac{1}{16}\|TT^{*}+T^{*}T\|^2+\frac{1}{4}m\left((\textit{Re}(T^2))^2\right) &\leq & w^4(T)\\
	&\leq & \frac{1}{8}\| TT^{*}+T^{*}T\|^2+\frac{1}{2}w^2(T^2), 
\end{eqnarray*}
which follows easily from Corollary \ref{cor1} by taking $A=I.$\\
(ii) Zamani \cite[Th. $2.10$]{Z} proved that 	
\[  w^2_A(T) \leq \frac{1}{2}\| TT^{\sharp_A}+T^{\sharp_A}T\|_A.\]
Since $w_A(T^2)\leq w^2_A(T)$ (see \cite[Prop. 3.10]{MXZ}), so $w_A(T^2)\leq \frac{1}{2}\|TT^{\sharp_A}+T^{\sharp_A}T\|_A.$ Therefore, the right hand inequality obtained in Corollary \ref{cor1} improves on  the inequality obtained by Zamani \cite[Th. $2.10$]{Z}.
\end{remark}

We next prove the following theorem.

\begin{theorem}\label{theorem:3}
	Let $T \in \mathcal{B}_A(\mathcal{H})$ where $A>0$. Then  
	\[w^{4}_A(T)\leq \frac{1}{4}w^2_A(T^2)+\frac{1}{8}w_A(T^2P+PT^2)+\frac{1}{16}\|P\|_A^2,\]  where $P=T^{\sharp_A}T+TT^{\sharp_A}.$
\end{theorem}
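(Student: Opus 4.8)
The plan is to exploit the $2\times2$ operator matrix machinery already developed, together with the power-refinement technique behind Theorem~\ref{theorem:2}. Set $\mathbf{T}=\left(\begin{array}{cc} O&T\\ T&O\end{array}\right)$ so that, by the ``in particular'' clause of Lemma~\ref{lemma:1}(iv), $w_B(\mathbf{T})=w_A(T)$, hence $w_B^4(\mathbf{T})=w_A^4(T)$. Applying Theorem~\ref{theorem:2} with $X=Y=T$ would only give the bound in Corollary~\ref{cor1}, which is weaker in the $w_A^2(T^2)$-term; so instead I would go back one level and apply Lemma~\ref{lemma:Z2} directly to $\mathbf{T}$. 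Writing $H_\theta=\textit{Re}_B(e^{i\theta}\mathbf{T})$, we have $w_A(T)=w_B(\mathbf{T})=\sup_\theta\|H_\theta\|_B$, and a direct computation gives $H_\theta^2=\tfrac12\left(\begin{array}{cc} P&O\\ O&P\end{array}\right)+\textit{Re}_B(e^{2i\theta}\mathbf{T}^2)$ where $\mathbf{T}^2=\left(\begin{array}{cc} T^2&O\\ O&T^2\end{array}\right)$ and $P=T^{\sharp_A}T+TT^{\sharp_A}$; here I use $A>0$ so that $e^{i\theta}$-conjugation behaves as expected and the off-diagonal terms collapse correctly.

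Next I would square again: $H_\theta^4=\bigl(\tfrac12 \mathbf P+\textit{Re}_B(e^{2i\theta}\mathbf T^2)\bigr)^2$ where $\mathbf P=\left(\begin{array}{cc} P&O\\ O&P\end{array}\right)$. Expanding, $H_\theta^4=\tfrac14\mathbf P^2+\tfrac12\bigl(\mathbf P\,\textit{Re}_B(e^{2i\theta}\mathbf T^2)+\textit{Re}_B(e^{2i\theta}\mathbf T^2)\,\mathbf P\bigr)+\bigl(\textit{Re}_B(e^{2i\theta}\mathbf T^2)\bigr)^2$. Since $H_\theta^4$ is $B$-positive and $A$-self-adjoint (being an even power of an $A$-self-adjoint operator), $\|H_\theta\|_B^4=\|H_\theta^4\|_B=w_B(H_\theta^4)$, and the triangle inequality for $w_B$ — which is a seminorm — gives
\[
\|H_\theta\|_B^4\le \tfrac14\|\mathbf P^2\|_B+\tfrac12 w_B\!\bigl(\mathbf P\,\textit{Re}_B(e^{2i\theta}\mathbf T^2)+\textit{Re}_B(e^{2i\theta}\mathbf T^2)\,\mathbf P\bigr)+w_B\!\bigl((\textit{Re}_B(e^{2i\theta}\mathbf T^2))^2\bigr).
\]
Now $(\textit{Re}_B(e^{2i\theta}\mathbf T^2))^2\le w_B(\textit{Re}_B(e^{2i\theta}\mathbf T^2))^2\cdot I_B$ in the $B$-order (standard for $B$-self-adjoint operators), so by Lemma~\ref{lemma:5} its $B$-norm is at most $\|\textit{Re}_B(e^{2i\theta}\mathbf T^2)\|_B^2\le w_B^2(\mathbf T^2)$, and by Lemma~\ref{lemma:1}(i) $w_B(\mathbf T^2)=w_A(T^2)$. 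For the cross term, $\mathbf P$ is $B$-self-adjoint and commutes past the $\textit{Re}$ appropriately, and one checks $\mathbf P\,\textit{Re}_B(e^{2i\theta}\mathbf T^2)+\textit{Re}_B(e^{2i\theta}\mathbf T^2)\,\mathbf P=\textit{Re}_B\!\bigl(e^{2i\theta}(\mathbf T^2\mathbf P+\mathbf P\mathbf T^2)\bigr)$, whence by Lemma~\ref{lemma:Z2} its $w_B$ over all $\theta$ is bounded by $w_B(\mathbf T^2\mathbf P+\mathbf P\mathbf T^2)=w_A(T^2P+PT^2)$, using Lemma~\ref{lemma:1}(i) again since $\mathbf T^2\mathbf P+\mathbf P\mathbf T^2$ is the diagonal matrix with entries $T^2P+PT^2$. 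Finally $\|\mathbf P^2\|_B=\|\mathbf P\|_B^2=\|P\|_A^2$. Taking the supremum over $\theta\in\mathbb{R}$ and combining the three estimates yields
\[
w_A^4(T)=\sup_\theta\|H_\theta\|_B^4\le \tfrac14 w_A^2(T^2)+\tfrac18 w_A(T^2P+PT^2)+\tfrac1{16}\|P\|_A^2,
\]
where I have redistributed the constants: $\tfrac14\|\mathbf P^2\|_B=\tfrac14\|P\|_A^2$ is halved by noting $\|H_\theta\|_B^4$ on the left already carries a factor — I would recheck the bookkeeping, but the constants $\tfrac14,\tfrac18,\tfrac1{16}$ in the statement fix the intended normalization.

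The main obstacle I anticipate is the exact computation of $H_\theta^2$ and $H_\theta^4$ as $2\times2$ block operators and verifying that the cross terms assemble into $\textit{Re}_B$ of a single scalar-rotated operator; this requires using $A>0$ (so that $P_A=I$ and $(S^{\sharp_A})^{\sharp_A}=S$, making the semi-Hilbertian adjoint behave like an ordinary adjoint), and care with the identities $(\mathbf T)^{\sharp_B}=\left(\begin{array}{cc} O&T^{\sharp_A}\\ T^{\sharp_A}&O\end{array}\right)$ and $(\mathbf T^{\sharp_B})^2=\left(\begin{array}{cc} (T^{\sharp_A})^2&O\\ O&(T^{\sharp_A})^2\end{array}\right)$. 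A secondary subtlety is justifying $w_B(S^2)=\|S^2\|_B$ for the relevant $B$-self-adjoint $S$ via Lemma~\ref{lemma:Z3}, and confirming that all intermediate operators indeed lie in $\mathcal B_B(\mathcal H\oplus\mathcal H)$, which follows from $T\in\mathcal B_A(\mathcal H)$ and the block-adjoint formula recalled before Lemma~\ref{lemma:1}. Once these block identities are in hand, the rest is the triangle inequality for $w_B$ and three applications of Lemmas~\ref{lemma:5}, \ref{lemma:1}(i), and \ref{lemma:Z2}.
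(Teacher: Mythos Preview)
Your plan is correct and, once the $2\times2$ wrapping is unwound, it is exactly the paper's proof. The paper works directly on $\mathcal{H}$ with $H_\theta=\textit{Re}_A(e^{i\theta}T)$: from $4H_\theta^2=2\,\textit{Re}_A(e^{2i\theta}T^2)+P$ one squares to get
\[
16H_\theta^4=4\bigl(\textit{Re}_A(e^{2i\theta}T^2)\bigr)^2+2\,\textit{Re}_A\bigl(e^{2i\theta}(T^2P+PT^2)\bigr)+P^2,
\]
applies the triangle inequality for $\|\cdot\|_A$ together with Lemma~\ref{lemma:Z2}, and then takes $\sup_\theta$. Your block real part is $H_\theta^{(B)}=\left(\begin{smallmatrix}O&h_\theta\\ h_\theta&O\end{smallmatrix}\right)$ with $h_\theta=\textit{Re}_A(e^{i\theta}T)$, so its even powers are diagonal with entries $h_\theta^{2k}$, and every $B$-norm or $B$-numerical radius you form collapses, via Lemma~\ref{lemma:1}(i) or (iv), to the corresponding $A$-quantity for $h_\theta$; the lift to $\mathcal{H}\oplus\mathcal{H}$ therefore adds no leverage. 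The coefficient mismatch you noticed is simply that the correct expansion is $H_\theta^2=\tfrac14\mathbf P+\tfrac12\,\textit{Re}_B(e^{2i\theta}\mathbf T^2)$ (not $\tfrac12\mathbf P+\textit{Re}_B(\cdot)$); squaring this produces precisely the constants $\tfrac14,\tfrac18,\tfrac1{16}$ without any further ``redistribution.''
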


\begin{proof}
	From Lemma \ref{lemma:Z2}, we have $w_A(T)=\sup_{\theta \in \mathbb{R}}\|H_\theta\|_A$ where $H_{\theta}=\textit{Re}_A(e^{i\theta}T)$. Then
	\begin{eqnarray*}
		H_{\theta} & = & \frac{1}{2}(e^{i\theta} T + e^{-i\theta} T^{\sharp_A}) \\
		\Rightarrow 4 {H_{\theta}}^2  &= & e^{2i\theta} T^2 + e^{-2i\theta} {T^{\sharp_A}}^2 + P \\ 
		\Rightarrow 16 {H_{\theta}}^4  &= & \big(e^{2i\theta} T^2 + e^{-2i\theta} {T^{\sharp_A}}^2 + P \big) \big(e^{2i\theta} T^2 + e^{-2i\theta} {T^{\sharp_A}}^2 + P \big)\\
		&= & \big(e^{2i\theta} T^2 + e^{-2i\theta} {T^{\sharp_A}}^2 \big)^2+\big(e^{2i\theta} T^2 + e^{-2i\theta} {T^{\sharp_A}}^2 \big)P \\ 
		&& +P \big(e^{2i\theta} T^2 + e^{-2i\theta} {T^{\sharp_A}}^2 \big)+P^2\\
		& = & 4\big(\textit{Re}_A(e^{2i\theta} T^2)\big)^2+ 2 \textit{Re}_A(e^{2i\theta} (T^2P+PT^2))  +P^2\\	
		\Rightarrow \| {H_{\theta}}^4 \|_A &\leq &	\frac{1}{4}\big \|	\textit{Re}_A(e^{2i\theta} T^2)	\big \|_A^2+\frac{1}{8}\big \|	\textit{Re}_A(e^{2i\theta} (T^2P+PT^2))	\big \|_A +\frac{1}{16} \|P\|_A^2\\
		&\leq & \frac{1}{4}w^2_A(T^2)+\frac{1}{8}w_A(T^2P+PT^2)+\frac{1}{16}\|P\|_A^2.
	\end{eqnarray*}
	Taking supremum over $\theta \in \mathbb{R}$, we get,
	\begin{eqnarray*}
		\Rightarrow w^{4}_A(T)&\leq&\frac{1}{4}w^2_A(T^2)+\frac{1}{8}w_A(T^2P+PT^2)+\frac{1}{16}\|P\|_A^2.
	\end{eqnarray*}
\end{proof}

\begin{remark}
	Using the inequality in Corollary \ref{cor2}, it is easy to see that if $A>0$ then $w_A(T^2P+PT^2)\leq 2w_A(T^2)\|P\|_A.$ In case $A>0$,  we would like to remark that the inequality obtained in Theorem \ref{theorem:3} improves on the inequality \cite[Th. 2.11]{Z} obtained by Zamani. As for numerical example, if we consider $T=\left(\begin{array}{ccc}
	0&1&0 \\
	0&0&2\\
	0&0&0
	\end{array}\right)$ and $A=\left(\begin{array}{ccc}
	1&0&0 \\
	0&1&0\\
	0&0&1
	\end{array}\right)$ on $\mathbb{C}^3$, then by simple computation we have
	\[\frac{1}{4}w^2_A(T^2)+\frac{1}{8}w_A(T^2P+PT^2)+\frac{1}{16}\|P\|_A^2=\frac{39}{16} < \frac{1}{16}\left (\| P\|_A+2w_A(T^2)\right )^2=\frac{49}{16}.\]
\end{remark}

Now we prove the following theorem.

\begin{theorem}\label{theorem:4}
	Let $T \in \mathcal{B}_A(\mathcal{H})$ where $A>0$. Then 
	\[w^{3}_A(T)\leq \frac{1}{4}w_A(T^3)+\frac{1}{4}w_A(T^2T^{\sharp_A}+{T^{\sharp_A}}T^2+TT^{\sharp_A}T).\] Moreover if $T^2=0$ then $w_A(T)=\frac{1}{2}\sqrt{\|TT^{\sharp_A}+{T^{\sharp_A}}T\|_A}$ and if $T^3=0$ then $w^3_A(T)=\frac{1}{4}w_A(T^2T{^{\sharp_A}}+{T{^{\sharp_A}}}T^2+TT{^{\sharp_A}}T)$.
\end{theorem}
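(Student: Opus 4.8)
The plan is to mimic the strategy of Theorem~\ref{theorem:3}, but now expand the \emph{cube} of $H_\theta = \textit{Re}_A(e^{i\theta}T)$ rather than the fourth power. Recall from Lemma~\ref{lemma:Z2} that $w_A(T) = \sup_{\theta}\|H_\theta\|_A$, and since $H_\theta$ is $A$-self-adjoint, Lemma~\ref{lemma:Z3} gives $\|H_\theta^3\|_A = w_A(H_\theta^3)$; more to the point, $\|H_\theta\|_A^3 = \|H_\theta^3\|_A$. So I would first compute
\[
8H_\theta^3 = \bigl(e^{i\theta}T + e^{-i\theta}T^{\sharp_A}\bigr)^3
= e^{3i\theta}T^3 + e^{-3i\theta}{T^{\sharp_A}}^3 + e^{i\theta}\bigl(T^2T^{\sharp_A} + TT^{\sharp_A}T + T^{\sharp_A}T^2\bigr) + e^{-i\theta}\bigl({T^{\sharp_A}}^2T + T^{\sharp_A}TT^{\sharp_A} + T{T^{\sharp_A}}^2\bigr),
\]
and then group this as $2\,\textit{Re}_A\!\bigl(e^{3i\theta}T^3\bigr) + 2\,\textit{Re}_A\!\bigl(e^{i\theta}(T^2T^{\sharp_A} + TT^{\sharp_A}T + T^{\sharp_A}T^2)\bigr)$, using that $(T^2T^{\sharp_A}+TT^{\sharp_A}T+T^{\sharp_A}T^2)^{\sharp_A} = {T^{\sharp_A}}^2T + T^{\sharp_A}TT^{\sharp_A} + T{T^{\sharp_A}}^2$. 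Taking $\|\cdot\|_A$ and using the triangle inequality together with $\|\textit{Re}_A(e^{i\phi}S)\|_A \le w_A(S)$ (a consequence of Lemma~\ref{lemma:Z2}), I get $8\|H_\theta\|_A^3 \le 2w_A(T^3) + 2w_A(T^2T^{\sharp_A}+TT^{\sharp_A}T+T^{\sharp_A}T^2)$, and taking the supremum over $\theta$ yields the claimed inequality after dividing by $8$.

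For the two "moreover" assertions, in the case $T^2 = 0$ I would observe that $4H_\theta^2 = e^{2i\theta}T^2 + e^{-2i\theta}{T^{\sharp_A}}^2 + (TT^{\sharp_A}+T^{\sharp_A}T) = TT^{\sharp_A}+T^{\sharp_A}T =: P$, independent of $\theta$; hence $\|H_\theta\|_A^2 = \tfrac14\|P\|_A$ for every $\theta$, and Lemma~\ref{lemma:Z2} gives $w_A(T) = \sup_\theta\|H_\theta\|_A = \tfrac12\sqrt{\|P\|_A}$. In the case $T^3 = 0$, the displayed inequality already gives $w_A^3(T) \le \tfrac14 w_A(T^2T^{\sharp_A}+T^{\sharp_A}T^2+TT^{\sharp_A}T)$ since $w_A(T^3) = 0$; for the reverse inequality I would note that from the expansion of $8H_\theta^3$, with the $T^3$ and ${T^{\sharp_A}}^3$ terms vanishing, we have $8H_\theta^3 = 2\,\textit{Re}_A\!\bigl(e^{i\theta}(T^2T^{\sharp_A}+TT^{\sharp_A}T+T^{\sharp_A}T^2)\bigr)$, so $\sup_\theta \|H_\theta\|_A^3 = \tfrac14 \sup_\theta \|\textit{Re}_A(e^{i\theta}(T^2T^{\sharp_A}+TT^{\sharp_A}T+T^{\sharp_A}T^2))\|_A = \tfrac14 w_A(T^2T^{\sharp_A}+TT^{\sharp_A}T+T^{\sharp_A}T^2)$, again by Lemma~\ref{lemma:Z2}, and the left side is $w_A^3(T)$.

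The main technical obstacle is the bookkeeping in the cube expansion: unlike the scalar or even the square case, the six cross terms do not collapse because $T$ and $T^{\sharp_A}$ do not commute, so one must carefully verify that the three "mixed-degree-$(2,1)$" terms $T^2T^{\sharp_A}$, $TT^{\sharp_A}T$, $T^{\sharp_A}T^2$ indeed arise with coefficient $e^{i\theta}$ each and that their $A$-adjoints give exactly the $e^{-i\theta}$ block, so that the grouping into $2\,\textit{Re}_A(\cdot)$ is legitimate. A secondary point worth stating explicitly is the inequality $\|\textit{Re}_A(e^{i\phi}S)\|_A \le w_A(S)$ for all $\phi$, which is immediate from Lemma~\ref{lemma:Z2} (the supremum over all phases dominates any single phase), and the use of $A$-self-adjointness of $H_\theta$ (via Lemma~\ref{lemma:Z3}) to pass between $\|H_\theta\|_A^3$ and $\|H_\theta^3\|_A$. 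Everything else is a routine application of the triangle inequality and passage to the supremum over $\theta$.
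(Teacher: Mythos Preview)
Your proposal is correct and follows essentially the same route as the paper: expand $8H_\theta^3=(e^{i\theta}T+e^{-i\theta}T^{\sharp_A})^3$, regroup as $2\,\textit{Re}_A(e^{3i\theta}T^3)+2\,\textit{Re}_A\bigl(e^{i\theta}(T^2T^{\sharp_A}+T^{\sharp_A}T^2+TT^{\sharp_A}T)\bigr)$, apply the triangle inequality together with Lemma~\ref{lemma:Z2}, and take the supremum over $\theta$; the two ``moreover'' clauses are handled identically. If anything, you are slightly more explicit than the paper in flagging the step $\|H_\theta\|_A^3=\|H_\theta^3\|_A$ (via $A$-self-adjointness of $H_\theta$) and in arguing both inequalities for the $T^3=0$ equality, but these are refinements of presentation rather than a different argument.
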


\begin{proof}
From Lemma \ref{lemma:Z2}, we have $w_A(T)=\sup_{\theta \in \mathbb{R}}\|H_\theta\|_A$ where $H_{\theta}=\textit{Re}_A(e^{i\theta}T)$. Then,
	\begin{eqnarray*}
		H_{\theta} & = & \frac{1}{2}(e^{i\theta} T + e^{-i\theta} T^{\sharp_A}) \\
		\Rightarrow 4 {H_{\theta}}^2  &= & e^{2i\theta} T^2 + e^{-2i\theta} {T^{\sharp_A}}^2 + T^{{\sharp_A}}T+TT^{{\sharp_A}} \\ 
		\Rightarrow 8H^3_{\theta}  &= &\big( e^{2i\theta} T^2 + e^{-2i\theta} {T^{\sharp_A}}^2 + T^{{\sharp_A}}T+TT^{{\sharp_A}}\big)(e^{i\theta} T + e^{-i\theta} T^{\sharp_A})\\
		\Rightarrow H^3_{\theta}  &= & \frac{1}{4}\textit{Re}_A(e^{3i\theta} T^3) +\frac{1}{4}\textit{Re}_A(e^{i\theta} (T^2T{^{\sharp_A}}+{T{^{\sharp_A}}}T^2+TT{^{\sharp_A}}T)\\
		\Rightarrow \|H^3_{\theta}\|_A  &\leq & \frac{1}{4}\|\textit{Re}_A(e^{3i\theta} T^3)\|_A +\frac{1}{4}\|\textit{Re}_A(e^{i\theta} (T^2T{^{\sharp_A}}+{T{^{\sharp_A}}}T^2+TT{^{\sharp_A}}T))\|_A\\
		&\leq & \frac{1}{4}w_A(T^3)+\frac{1}{4}w_A(T^2T^{\sharp_A}+{T^{\sharp_A}}T^2+TT^{\sharp_A}T). 
	\end{eqnarray*}
	Taking supremum over $\theta \in \mathbb{R}$, we get the desired inequality.\\
	If $T^2=0$, then $4 {H_{\theta}}^2  =  T^{\sharp_A}T+TT^{\sharp_A}$ and so $w_A(T)=\frac{1}{2}\sqrt{\|TT^{\sharp_A}+{T^{\sharp_A}}T\|_A}.$\\
	If $T^3=0$, then $H^3_{\theta} = \frac{1}{4}\textit{Re}_A(e^{i\theta} (T^2T{^{\sharp_A}}+{T{^{\sharp_A}}}T^2+TT{^{\sharp_A}}T))$ and so $w^3_A(T) = \frac{1}{4}w_A(T^2T{^{\sharp_A}}+{T{^{\sharp_A}}}T^2+TT{^{\sharp_A}}T)$.
\end{proof}

\begin{remark}
Here we would like to remark that the bound obtained in  Theorem \ref{theorem:4} improves on the existing upper bound in \cite[Cor. 2.8]{Z} when $A>0$. Note that  if $T^2=0$ then $w_A(T)=\frac{1}{2}\sqrt{\|TT^{\sharp_A}+{T^{\sharp_A}}T\|_A}$. But converse is not true, that is, $w_A(T)=\frac{1}{2}\sqrt{\|TT^{\sharp_A}+{T^{\sharp_A}}T\|_A}$ does not always imply $T^2=O.$ As for example we consider $T=\left(\begin{array}{ccc}
	0&2&0 \\
	0&0&0\\
	0&0&1
	\end{array}\right)$ and $A=\left(\begin{array}{ccc}
	1&0&0 \\
	0&1&0\\
	0&0&1
	\end{array}\right)$ on $\mathbb{C}^3$. Then we see that  $w_A(T)=\frac{1}{2}\sqrt{\|TT^{\sharp_A}+{T^{\sharp_A}}T\|_A}=1$ but $T^2=\left(\begin{array}{ccc}
	0&0&0 \\
	0&0&0\\
	0&0&1
	\end{array}\right)\neq O.$
\end{remark}

Next we prove the following inequality.

\begin{theorem}\label{theorem:5}
Let $T \in \mathcal{B}_A(\mathcal{H})$. Then for each $r\geq 1$, 
	\[w^{2r}_A(T)\leq \frac{1}{2}w^r_A(T^2)+\frac{1}{4}\big\|(T^{\sharp_A}T)^r+(TT^{\sharp_A})^r\big\|_A.\] 
\end{theorem}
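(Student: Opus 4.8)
The plan is to push the $\textit{Re}_A(e^{i\theta}T)$ technique of Lemma~\ref{lemma:Z2} down to the level of a single vector. Put $H_\theta=\textit{Re}_A(e^{i\theta}T)=\tfrac12(e^{i\theta}T+e^{-i\theta}T^{\sharp_A})$, so that $w_A(T)=\sup_{\theta\in\mathbb{R}}\|H_\theta\|_A=\sup_{\theta\in\mathbb{R}}\sup_{\|x\|_A=1}\|H_\theta x\|_A$, and hence $w^{2r}_A(T)=\sup_{\theta\in\mathbb{R}}\sup_{\|x\|_A=1}\|H_\theta x\|_A^{2r}$. Expanding the square and using $(T^2)^{\sharp_A}=(T^{\sharp_A})^2$ gives the identity
\[4H_\theta^2=2\,\textit{Re}_A(e^{2i\theta}T^2)+T^{\sharp_A}T+TT^{\sharp_A}.\]
Since $H_\theta$ is $A$-self-adjoint one has $\|H_\theta x\|_A^2=\langle H_\theta^2 x,x\rangle_A$; pairing the displayed identity against a vector $x$ with $\|x\|_A=1$ and using $\langle\textit{Re}_A(S)x,x\rangle_A=\textit{Re}\langle Sx,x\rangle_A$, $\langle T^{\sharp_A}Tx,x\rangle_A=\|Tx\|_A^2$ and $\langle TT^{\sharp_A}x,x\rangle_A=\|T^{\sharp_A}x\|_A^2$ yields
\[\|H_\theta x\|_A^2=\tfrac12\,\textit{Re}\big(e^{2i\theta}\langle T^2 x,x\rangle_A\big)+\tfrac14\|Tx\|_A^2+\tfrac14\|T^{\sharp_A}x\|_A^2.\]
Only the first summand depends on $\theta$, so taking the supremum over $\theta$ and interchanging the two suprema gives the identity $w^2_A(T)=\sup_{\|x\|_A=1}\big(\tfrac12|\langle T^2 x,x\rangle_A|+\tfrac14\|Tx\|_A^2+\tfrac14\|T^{\sharp_A}x\|_A^2\big)$.

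Raising to the power $r\ge1$ (the map $t\mapsto t^r$ is nondecreasing on $[0,\infty)$, hence commutes with $\sup$), I would apply the convexity of $t\mapsto t^r$ via Jensen's inequality with the weights $\tfrac12,\tfrac14,\tfrac14$: for the nonnegative numbers $a=|\langle T^2 x,x\rangle_A|$, $b=\|Tx\|_A^2$ and $c=\|T^{\sharp_A}x\|_A^2$,
\[\big(\tfrac12 a+\tfrac14 b+\tfrac14 c\big)^r\le\tfrac12 a^r+\tfrac14 b^r+\tfrac14 c^r.\]
Taking the supremum over $x$ and using $\sup(f+g)\le\sup f+\sup g$, the $a$-term contributes $\tfrac12\,w^r_A(T^2)$. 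For the remaining two I would invoke the mixed-Schwarz (McCarthy-type) inequality $\langle Sx,x\rangle_A^{\,r}\le\langle S^r x,x\rangle_A$, valid for an $A$-positive operator $S$, $\|x\|_A=1$ and $r\ge1$, applied with $S=T^{\sharp_A}T$ and $S=TT^{\sharp_A}$; this gives $b^r\le\langle(T^{\sharp_A}T)^r x,x\rangle_A$ and $c^r\le\langle(TT^{\sharp_A})^r x,x\rangle_A$, so that $b^r+c^r\le\big\langle\big((T^{\sharp_A}T)^r+(TT^{\sharp_A})^r\big)x,x\big\rangle_A$. As $(T^{\sharp_A}T)^r+(TT^{\sharp_A})^r$ is $A$-positive, its $A$-norm equals $\sup_{\|x\|_A=1}\langle(\,\cdot\,)x,x\rangle_A$, whence the $b,c$-part is at most $\tfrac14\big\|(T^{\sharp_A}T)^r+(TT^{\sharp_A})^r\big\|_A$. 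Adding the two contributions yields exactly the asserted bound.

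The step that needs the most care is the power inequality $\langle Sx,x\rangle_A^{\,r}\le\langle S^r x,x\rangle_A$ for $A$-positive $S$, together with the meaning of $S^r$ for non-integer $r$: both rest on the $A$-functional calculus for $A$-positive operators, equivalently on the standard identification of such operators with genuine positive operators on the Hilbert-space completion of $\mathcal{R}(A^{1/2})$, under which $\|\cdot\|_A$, $w_A(\cdot)$, the map $T\mapsto T^{\sharp_A}$ and operator products are all faithfully transported. Granting this, the remaining work is only the algebraic expansion of $H_\theta^2$, the elementary supremum bookkeeping, and the convexity of $t\mapsto t^r$.
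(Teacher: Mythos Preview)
Your argument is correct and starts exactly as the paper does, with the identity $4H_\theta^2=2\,\textit{Re}_A(e^{2i\theta}T^2)+T^{\sharp_A}T+TT^{\sharp_A}$. The divergence is only in how the $r$-th power is pushed through the second summand. The paper stays at the operator-norm level: it first bounds $\|H_\theta\|_A^2\le\tfrac12\|\textit{Re}_A(e^{2i\theta}T^2)\|_A+\tfrac12\big\|\tfrac{T^{\sharp_A}T+TT^{\sharp_A}}{2}\big\|_A$, applies scalar convexity of $t^r$ to these two norms, and then uses operator concavity of $t^{1/r}$ (valid for $0<1/r\le1$) in the form $\tfrac{S_1+S_2}{2}\le\big(\tfrac{S_1^r+S_2^r}{2}\big)^{1/r}$ together with Lemma~\ref{lemma:5} to reach $\big\|\tfrac{T^{\sharp_A}T+TT^{\sharp_A}}{2}\big\|_A^r\le\tfrac12\big\|(T^{\sharp_A}T)^r+(TT^{\sharp_A})^r\big\|_A$. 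You instead keep a single unit vector fixed, apply scalar convexity of $t^r$ to the three numbers $a,b,c$, and replace the operator-concavity step by the pointwise McCarthy inequality $\langle Sx,x\rangle_A^{\,r}\le\langle S^r x,x\rangle_A$ before taking the supremum. Both routes ultimately rest on the same $A$-functional calculus for $A$-positive operators that you flag; yours is marginally more elementary (only scalar convexity plus McCarthy, no operator-level L\"owner--Heinz), and it also yields along the way the exact vector-level identity $w_A^2(T)=\sup_{\|x\|_A=1}\big(\tfrac12|\langle T^2x,x\rangle_A|+\tfrac14\|Tx\|_A^2+\tfrac14\|T^{\sharp_A}x\|_A^2\big)$, which the paper's norm-level argument does not record.
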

\begin{proof}
From Lemma \ref{lemma:Z2}, we get $w_A(T)=\sup_{\theta \in \mathbb{R}}\|H_\theta\|_A$ where $H_{\theta}=\textit{Re}_A(e^{i\theta }T)$. Now,
\begin{eqnarray*}
		H_{\theta} & = & \frac{1}{2}(e^{i\theta} T + e^{-i\theta} T^{\sharp_A}) \\
		\Rightarrow 4 {H_{\theta}}^2  &= & e^{2i\theta} T^2 + e^{-2i\theta} {T^{\sharp_A}}^2 + T^{\sharp_A}T+TT^{\sharp_A} \\ 
		\Rightarrow {H_{\theta}}^2 & = & \frac{1}{2}\textit{Re}_A(e^{2i\theta} T^2) +\frac{1}{4}(T^{\sharp_A}T+TT^{\sharp_A})\\
		\Rightarrow \|{H_{\theta}}^2\|_A & \leq & \frac{1}{2} \big\|\textit{Re}_A(e^{2i\theta} T^2)\big\|_A +\frac{1}{4}\big\|T^{\sharp_A}T+TT^{\sharp_A}\big\|_A 
\end{eqnarray*}
	For $r\geq 1$,  $t^r$ and $t^{\frac{1}{r}}$ are convex and  concave  functions respectively and using that we get,
\begin{eqnarray*}
		\|{H_{\theta}}^2\|_A^r &\leq & \left \{\frac{1}{2} \big\|\textit{Re}_A(e^{2i\theta} T^2)\big\|_A +\frac{1}{2}\left\|\frac{T^{\sharp_A}T+TT^{\sharp_A}}{2}\right\|_A \right\}^r\\
		&\leq & \frac{1}{2} \big\|\textit{Re}_A(e^{2i\theta} T^2)\big\|_A^r +\frac{1}{2}\left\|\frac{T^{\sharp_A}T+TT^{\sharp_A}}{2}\right\|_A^r\\
		&\leq & \frac{1}{2} \big\|\textit{Re}_A(e^{2i\theta} T^2)\big\|_A^r +\frac{1}{2}\left\|\left(\frac{(T^{\sharp_A}T)^r+(TT^{\sharp_A})^r}{2}\right)^{\frac{1}{r}}\right\|_A^r\\
		&= & \frac{1}{2} \big\|\textit{Re}_A(e^{2i\theta} T^2)\big\|_A^r +\frac{1}{2}\left\|\frac{(T^{\sharp_A}T)^r+(TT^{\sharp_A})^r}{2}\right\|_A\\
		&\leq&\frac{1}{2}w^r_A(T^2)+\frac{1}{4}\left\|(T^{\sharp_A}T)^r+(TT^{\sharp_A})^r\right\|_A.
\end{eqnarray*}
Taking supremum over $\theta \in \mathbb{R}$, we get
\begin{eqnarray*}
		w^{2r}_A(T)&\leq&\frac{1}{2}w^r_A(T^2)+\frac{1}{4}\big\|(T^{\sharp_A}T)^r+(TT^{\sharp_A})^r\big\|_A.
\end{eqnarray*}
\end{proof}

\begin{remark}
Here we would like to remark that if we take $r=1$ in the above Theorem \ref{theorem:5}, we get the inequality \cite[Th. 2.11]{Z} proved by Zamani .
\end{remark}

Now we obtain a lower bound for A-numerical radius.

\begin{theorem}\label{theorem:6}
	Let $T \in \mathcal{B}_A(\mathcal{H})$ where $A>0$. Then  
	\[w^{4}_A(T)\geq \frac{1}{4}C^2_A(T^2)+\frac{1}{8}c_A(T^2P+PT^2)+\frac{1}{16}\|P\|_A^2,\]  where $P=T^{\sharp_A}T+TT^{\sharp_A}, C_A(T)=\inf_{\|x\|_A=1}\inf_{\phi \in \mathbb{R}}\|\textit{Re}_A(e^{i\phi} T)x\|_A.$
\end{theorem}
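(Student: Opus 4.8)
The plan is to mirror the proof of Theorem \ref{theorem:3}, running every estimate in the opposite direction and using the inner infimum built into the definition of $C_A$ to absorb a troublesome angle. Write $H_\theta = \textit{Re}_A(e^{i\theta}T)$. Since $H_\theta$ is $A$-self-adjoint, $H_\theta^2$ and $H_\theta^4$ are $A$-positive and $\|H_\theta^4\|_A = \|H_\theta^2\|_A^2 = \|H_\theta\|_A^4$; combining this with Lemma \ref{lemma:Z2} and the identity $\|S\|_A = \sup_{\|x\|_A=1}\langle Sx,x\rangle_A$ valid for $A$-positive $S$, we get
\[ w_A^4(T) \;=\; \sup_{\theta\in\mathbb{R}}\|H_\theta^4\|_A \;=\; \sup_{\theta\in\mathbb{R}}\ \sup_{\|x\|_A=1}\langle H_\theta^4 x,x\rangle_A \;=\; \sup_{\|x\|_A=1}\ \sup_{\theta\in\mathbb{R}}\langle H_\theta^4 x,x\rangle_A . \]
Here $A>0$ is used so that all these suprema over $\|x\|_A=1$ run over the full unit sphere of $(\mathcal{H},\|\cdot\|_A)$.

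Next, exactly the computation already performed in the proof of Theorem \ref{theorem:3} produces the operator identity
\[ 16\,H_\theta^4 \;=\; 4\big(\textit{Re}_A(e^{2i\theta}T^2)\big)^2 \;+\; 2\,\textit{Re}_A\!\big(e^{2i\theta}(T^2P+PT^2)\big) \;+\; P^2 . \]
I would pair this with a fixed vector $x$ satisfying $\|x\|_A=1$. Using that $\textit{Re}_A(e^{2i\theta}T^2)$ and $P$ are $A$-self-adjoint (so that $\langle R^2x,x\rangle_A = \|Rx\|_A^2$) together with the elementary fact $\langle\textit{Re}_A(S)x,x\rangle_A = \textit{Re}\,\langle Sx,x\rangle_A$, this gives
\[ 16\,\langle H_\theta^4 x,x\rangle_A \;=\; 4\big\|\textit{Re}_A(e^{2i\theta}T^2)x\big\|_A^2 \;+\; 2\,\textit{Re}\!\big(e^{2i\theta}\langle(T^2P+PT^2)x,x\rangle_A\big) \;+\; \|Px\|_A^2 . \]

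Now I would choose, for this $x$, an angle $\theta=\theta(x)$ making $e^{2i\theta}\langle(T^2P+PT^2)x,x\rangle_A$ real and nonnegative, hence equal to $|\langle(T^2P+PT^2)x,x\rangle_A|$. For that angle the first summand is at least $4C_A^2(T^2)$ (since $\|\textit{Re}_A(e^{2i\theta}T^2)x\|_A \ge \inf_{\phi\in\mathbb{R}}\|\textit{Re}_A(e^{i\phi}T^2)x\|_A \ge C_A(T^2)$) and the cross term equals $2|\langle(T^2P+PT^2)x,x\rangle_A| \ge 2\,c_A(T^2P+PT^2)$, so
\[ 16\,w_A^4(T) \;\ge\; 16\,\langle H_{\theta(x)}^4 x,x\rangle_A \;\ge\; 4C_A^2(T^2) + 2\,c_A(T^2P+PT^2) + \|Px\|_A^2 . \]
Since the left-hand side and the first two terms on the right do not depend on $x$, taking the supremum over all unit vectors $x$ replaces $\|Px\|_A^2$ by $\big(\sup_{\|x\|_A=1}\|Px\|_A\big)^2 = \|P\|_A^2$, and dividing by $16$ yields the claimed inequality.

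The point requiring the most care — and the only step that is not pure bookkeeping — is the treatment of $\|\textit{Re}_A(e^{2i\theta}T^2)x\|_A^2$: the angle $\theta$ has already been spent on the cross term, so this factor cannot be driven to any convenient value and one has to settle for its infimum over all angles, which is precisely what the inner infimum in the definition of $C_A$ supplies. The only remaining subtlety is that the three lower bounds naturally occur at different arguments — the first two become $x$-free once passed to $C_A$ and $c_A$, whereas $\|Px\|_A^2$ still depends on $x$ — so the supremum over $x$ must be taken only at the very end in order to recover $\|P\|_A^2$ without disturbing the other terms.
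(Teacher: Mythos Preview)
Your proposal is correct and follows essentially the same route as the paper's proof: both fix a unit vector $x$, choose $\theta$ so that $e^{2i\theta}\langle(T^2P+PT^2)x,x\rangle_A$ is nonnegative real, expand $16H_\theta^4$ via the identity from Theorem \ref{theorem:3}, bound the three summands below by $4C_A^2(T^2)$, $2c_A(T^2P+PT^2)$, and $\|Px\|_A^2$ respectively, and finally take the supremum over $x$. Your write-up is in fact a bit more explicit than the paper's about the double supremum structure and about why the angle already spent on the cross term forces the use of the inner infimum in $C_A(T^2)$.
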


\begin{proof}
	We know that $w_A(T)=\sup_{\phi \in \mathbb{R}}\|H_\phi\|_A$ where $H_{\phi}=\textit{Re}_A(e^{i\phi }T)$. Let $x$ be a unit vector in $H$ and $\theta$ be a real number such that $$e^{2i\theta}\langle(T^2P+PT^2)x,x\rangle_A = |\langle(T^2P+PT^2)x,x\rangle_A|.$$ Then,
	\begin{eqnarray*}
		H_{\theta} & = & \frac{1}{2}(e^{i\theta} T + e^{-i\theta} T^{\sharp_A}) \\
		\Rightarrow 4 {H_{\theta}}^2  &= & e^{2i\theta} T^2 + e^{-2i\theta} {T^{\sharp_A}}^2 + P \\ 
		\Rightarrow 16 {H_{\theta}}^4  &= & \big(e^{2i\theta} T^2 + e^{-2i\theta} {T^{\sharp_A}}^2 + P \big) \big(e^{2i\theta} T^2 + e^{-2i\theta} {T^{\sharp_A}}^2 + P \big)\\
		&= & \big(e^{2i\theta} T^2 + e^{-2i\theta} {T^{\sharp_A}}^2 \big)^2+\big(e^{2i\theta} T^2 + e^{-2i\theta} {T^{\sharp_A}}^2 \big)P \\ 
		&& +P \big(e^{2i\theta} T^2 + e^{-2i\theta} {T^{\sharp_A}}^2 \big)+P^2\\
		& = & 4\big(\textit{Re}_A(e^{2i\theta} T^2)\big)^2+ 2 \textit{Re}_A(e^{2i\theta} (T^2P+PT^2))  +P^2\\
		\Rightarrow 16 w^4_A(T) &\geq&  \|4\big(\textit{Re}_A(e^{2i\theta} T^2)\big)^2+ 2 \textit{Re}_A(e^{2i\theta} (T^2P+PT^2))  +P^2\|_A\\
		&\geq&  | \langle \big(4\big(\textit{Re}_A(e^{2i\theta} T^2)\big)^2+ 2 \textit{Re}_A(e^{2i\theta} (T^2P+PT^2))  +P^2\big)x,x \rangle_A| \\
		&=& | 4 \langle\big(\textit{Re}_A(e^{2i\theta} T^2)\big)^2x,x \rangle_A + 2 \textit{Re}_A(e^{2i\theta} \langle (T^2P+PT^2)x,x\rangle_A)  +\langle P^2x,x \rangle_A | \\
		&=&  4 \|\big(\textit{Re}_A(e^{2i\theta} T^2)\big)x \|_A^2+ 2 |\langle (T^2P+PT^2)x,x\rangle_A|  +\|Px\|_A^2 \\
		&\geq&  4 \|\big(\textit{Re}_A(e^{2i\theta} T^2)\big)x \|_A^2+ 2 c_A(T^2P+PT^2)  +\|Px\|_A^2 \\
		\Rightarrow 16 w^4_A(T) &\geq& 4 C^2_A(T^2)+ 2 c_A(T^2P+PT^2)  +\sup_{\|x\|_A=1}\|Px\|_A^2 \\
		&=&  4 C^2_A(T^2)+ 2 c_A(T^2P+PT^2)  +\|P\|_A^2 \\
		\Rightarrow w^{4}_A(T)&\geq& \frac{1}{4}C^2_A(T^2)+\frac{1}{8}c_A(T^2P+PT^2)+\frac{1}{16}\|P\|_A^2.	
	\end{eqnarray*}
	This completes the proof. 
\end{proof}

\begin{remark}
	It is clear that  $\frac{1}{4}C^2_A(T^2)+\frac{1}{8}c_A(T^2P+PT^2)+\frac{1}{16}\|P\|_A^2\geq  \frac{1}{16}\|T^{\sharp_A}T+TT^{\sharp_A}\|_A^2 \geq \frac{1}{16}\|T\|_A^4.$ So, if $A>0$ then the inequality obtained in Theorem \ref{theorem:6} is better than the first inequality in  \cite [Cor. 2.8]{Z}, obtained by Zamani.
\end{remark}

\section{\textbf{A-numerical radius inequalities for product of operators in $\mathcal{B}_A(\mathcal{H})$}}

We begin this section with the following  $A$-numerical radius inequality for sum of product of operators.

\begin{theorem}\label{theorem:H1}
	Let $ P,Q,X,Y \in \mathcal{B}_A(\mathcal{H})$ where $A>0$. Then
	\[w_A(PXQ^{\sharp_A} \pm QYP^{\sharp_A}) \leq 2\|P\|_A\|Q\|_Aw_B\left(\begin{array}{cc}
	O&X \\
	Y&O
	\end{array}\right).\]
	In particular,  \[w_A(PXQ^{\sharp_A} \pm QXP^{\sharp_A})\leq2\|P\|_A\|Q\|_Aw_A(X).\]
\end{theorem}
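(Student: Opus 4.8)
The plan is to realize the operator $PXQ^{\sharp_A} \pm QYP^{\sharp_A}$ as a compression of the $2\times 2$ matrix $\left(\begin{array}{cc} O & X \\ Y & O \end{array}\right)$ conjugated by suitable block operators, so that the $B$-numerical radius on $\mathcal{H}\oplus\mathcal{H}$ dominates the $A$-numerical radius on $\mathcal{H}$. First I would form the block operators $R = \left(\begin{array}{cc} P & O \\ O & Q \end{array}\right)$ and $S = \left(\begin{array}{cc} O & Q \\ \pm P & O \end{array}\right)$ in $\mathcal{B}_B(\mathcal{H}\oplus\mathcal{H})$, noting that their $B$-adjoints are computed entrywise via $T^{\sharp_B} = (T_{ji}^{\sharp_A})_{2\times 2}$. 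A direct block computation gives $R \left(\begin{array}{cc} O & X \\ Y & O \end{array}\right) S^{\sharp_B}$ (or a similar product) equal to $\left(\begin{array}{cc} \star & O \\ O & \star \end{array}\right)$ or to an off-diagonal block matrix whose nonzero entry is exactly $PXQ^{\sharp_A} \pm QYP^{\sharp_A}$; one then reads off the $A$-numerical radius of that entry using Lemma~\ref{lemma:1}(i) or Lemma~\ref{lemma:1}(iv).

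The key estimates are then standard submultiplicativity facts from the preliminaries: for $T \in \mathcal{B}_B(\mathcal{H}\oplus\mathcal{H})$ and $B$-bounded $R$, one has $w_B(RTS^{\sharp_B}) \le \|R\|_B \, w_B(T) \, \|S^{\sharp_B}\|_B = \|R\|_B \|S\|_B \, w_B(T)$ — this follows from $|\langle RTS^{\sharp_B}u,u\rangle_B| = |\langle T S^{\sharp_B}u, R^{\sharp_B}u\rangle_B|$ together with the generalized Cauchy–Schwarz-type bound $|\langle Tv,w\rangle_B| \le w_B(T)\|v\|_B\|w\|_B$ valid for all $v,w$ (the mixed form of the numerical radius inequality; it holds in the $A$-setting exactly as in the classical one, and the $A>0$ hypothesis ensures $\|\cdot\|_B$ is a genuine norm so no degeneracy arises). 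Since $\|R\|_B = \max\{\|P\|_A, \|Q\|_A\}$ and likewise for $S$, we would actually get the sharper bound with $\max\{\|P\|_A,\|Q\|_A\}^2$; to land on the stated $2\|P\|_A\|Q\|_A$ one instead splits $PXQ^{\sharp_A}\pm QYP^{\sharp_A}$ as a sum of two one-sided products, bounds each by $\|P\|_A\|Q\|_A\, w_B\left(\begin{array}{cc} O&X\\Y&O\end{array}\right)$ via the same mechanism, and adds — the factor $2$ is the price of the triangle inequality. For the ``in particular'' statement, set $Y = X$; then $\left(\begin{array}{cc} O&X\\X&O\end{array}\right)$ has $w_B = w_A(X)$ by the last assertion of Lemma~\ref{lemma:1}(iv), giving the claimed bound.

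I expect the main obstacle to be bookkeeping rather than conceptual: choosing the block conjugators $R,S$ so that the product lands exactly on $PXQ^{\sharp_A}\pm QYP^{\sharp_A}$ with the correct sign, and verifying the mixed-form inequality $|\langle Tv,w\rangle_B|\le w_B(T)\|v\|_B\|w\|_B$ in the semi-Hilbertian setting (one polarizes and uses that $\|\cdot\|_B$ is a norm since $A>0$, hence $B>0$). Care is also needed because $R,S$ need not be $B$-unitary, so one cannot invoke $w_B(U^{\sharp_B}TU)=w_B(T)$ directly as in Lemma~\ref{lemma:1}; the submultiplicative estimate is what does the work here. Once the decomposition into two one-sided products is set up, each factor is handled identically, and summing with the triangle inequality for $w_A$ finishes the proof.
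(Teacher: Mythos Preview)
Your plan has a genuine gap: the ``mixed'' numerical radius inequality
\[
|\langle Tv,w\rangle_B|\le w_B(T)\,\|v\|_B\,\|w\|_B
\]
that you invoke to bound $w_B(RTS^{\sharp_B})$ is \emph{false} in general. A standard counterexample (already with $A=I$ on $\mathbb{C}^2$) is $T=\left(\begin{smallmatrix}0&1\\0&0\end{smallmatrix}\right)$, $v=e_2$, $w=e_1$: then $\langle Tv,w\rangle=1$ while $w(T)=\tfrac12$. Polarization only yields $|\langle Tv,w\rangle|\le 2\,w(T)\|v\|\|w\|$, which would double every constant in your argument. Consequently neither the ``sharper'' bound with $\max\{\|P\|_A,\|Q\|_A\}^2$ nor the split-and-add variant (which relies on ``the same mechanism'') goes through as written.

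What makes the paper's proof work is that it conjugates \emph{symmetrically}: it takes the single operator
\[
C=\left(\begin{array}{cc}P&Q\\O&O\end{array}\right),
\]
computes $CZC^{\sharp_B}=\left(\begin{smallmatrix}PXQ^{\sharp_A}+QYP^{\sharp_A}&O\\O&O\end{smallmatrix}\right)$, and then uses the \emph{valid} estimate $w_B(CZC^{\sharp_B})\le\|C\|_B^2\,w_B(Z)$ (this is Zamani's Lemma~4.4; it holds because $\langle CZC^{\sharp_B}u,u\rangle_B=\langle Z(C^{\sharp_B}u),(C^{\sharp_B}u)\rangle_B$ involves the \emph{same} vector on both sides). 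This yields the bound $(\|P\|_A^2+\|Q\|_A^2)\,w_B(Z)$, and the product $2\|P\|_A\|Q\|_A$ is then obtained by the scaling trick $P\mapsto tP$, $Q\mapsto t^{-1}Q$ and minimizing over $t>0$. The minus sign comes from replacing $Y$ by $-Y$ and Lemma~\ref{lemma:1}(iii); your treatment of the ``in particular'' case via Lemma~\ref{lemma:1}(iv) is correct. The missing idea in your proposal is precisely this: pack $P$ and $Q$ into one block operator so that the conjugation is of the form $C(\,\cdot\,)C^{\sharp_B}$, not $R(\,\cdot\,)S^{\sharp_B}$ with $R\ne S$.
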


\begin{proof}
	Let $C=\left(\begin{array}{cc}
	P&Q \\
	O&O
	\end{array}\right)$ and $Z=\left(\begin{array}{cc}
	O&X \\
	Y&O
	\end{array}\right)$. Then from an easy calculation we get,  \[CZC^{\sharp_B}=\left(\begin{array}{cc}
	PXQ^{\sharp_A}+QYP^{\sharp_A}&O \\
	O&O
	\end{array}\right).\] Therefore,
	\begin{eqnarray*}
		w_A(PXQ^{\sharp_A}+QYP^{\sharp_A})&=&w_B\left(\begin{array}{cc}
			PXQ^{\sharp_A}+QYP^{\sharp_A}&O \\
			O&O
		\end{array}\right) \\
		&=& w_B(CZC^{\sharp_B}),~~ \mbox{using Lemma }\ref{lemma:1}~~ (i) \\
		&\leq& \|C\|_B^2w_B(Z), ~~\mbox{using \cite[Lemma 4.4]{Z}} \\
		&=& \|PP^{\sharp_A}+QQ^{\sharp_A}\|_Aw_B(Z)\\
		&\leq& (\|P\|_A^2 + \|Q\|_A^2)w_B(Z).
	\end{eqnarray*}
	Replacing $P ~~\mbox{and} ~~ Q $ by $tP~~\mbox{and } ~~ \frac{1}{t}Q$ respectively with $t > 0$ in this above inequality,  we get
	\[w_A(PXQ^{\sharp_A}+QYP^{\sharp_A}) \leq \left(\frac{t^4\|P\|_A^2 + \|Q\|_A^2}{t^2}\right)w_B(Z).\]
Note that \[\min_{t>0} \frac{t^4\|P\|_A^2 + \|Q\|_A^2}{t^2}= 2\|P\|_A\|Q\|_A\]
	and so
	\begin{eqnarray*}
		w_A(PXQ^{\sharp_A}+QYP^{\sharp_A}) \leq 2 \|P\|_A\|Q\|_A w_B\left(\begin{array}{cc}
			O&X \\
			Y&O
		\end{array}\right).
	\end{eqnarray*}
	Replacing $Y$ by $-Y$ in the above inequality and using Lemma \ref{lemma:1} (iii), we get
	\begin{eqnarray*}
		w_A(PXQ^{\sharp_A}-QYP^{\sharp_A}) \leq 2 \|P\|_A\|Q\|_A w_B\left(\begin{array}{cc}
			O&X \\
			Y&O
		\end{array}\right).
	\end{eqnarray*}
Taking $X=Y$ and using Lemma \ref{lemma:1} (iv), we get
	\[w_A(PXQ^{\sharp_A} \pm QXP^{\sharp_A}) \leq 2\|P\|_A\|Q\|_Aw_A(X).\]
	This completes the proof of the theorem.
\end{proof}

\begin{remark}
Here we note that the inequality \[w_A(PXQ^{\sharp_A} + QYP^{\sharp_A})\leq 2\|P\|_A\|Q\|_Aw_B\left(\begin{array}{cc}
	O&X \\
	Y&O
	\end{array}\right)\] in Theorem \ref{theorem:H1} holds also when $A\geq 0$. 
\end{remark}

Considering $X=Y=T $ (say), $P=I$ in Theorem \ref{theorem:H1}, we get the following inequality.

\begin{cor}\label{cor2}
	Let $T, Q \in \mathcal{B}_A(\mathcal{H})$ where $A>0$. Then
	\[w_A(TQ^{\sharp_A} \pm QT) \leq 2w_A(T)\|Q\|_A.\]
\end{cor}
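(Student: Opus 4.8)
The plan is to derive Corollary \ref{cor2} as a direct specialization of Theorem \ref{theorem:H1}. The statement of Theorem \ref{theorem:H1} gives, for $P,Q,X,Y \in \mathcal{B}_A(\mathcal{H})$ with $A>0$, the bound
\[
w_A(PXQ^{\sharp_A} \pm QYP^{\sharp_A}) \leq 2\|P\|_A\|Q\|_A\, w_B\!\left(\begin{array}{cc} O&X \\ Y&O \end{array}\right),
\]
together with the particular case $w_A(PXQ^{\sharp_A} \pm QXP^{\sharp_A}) \leq 2\|P\|_A\|Q\|_A w_A(X)$ when $X=Y$.

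First I would set $X = Y = T$ and $P = I$ in the ``in particular'' clause of Theorem \ref{theorem:H1}. Since $I \in \mathcal{B}_A(\mathcal{H})$ with $I^{\sharp_A} = P_A$ acting as the identity on $\overline{\mathcal{R}(A)}$, and since $A>0$ forces $P_A = I$, we have $PXQ^{\sharp_A} = TQ^{\sharp_A}$ and $QXP^{\sharp_A} = QT$. Also $\|P\|_A = \|I\|_A = 1$. Substituting these into the inequality $w_A(PXQ^{\sharp_A} \pm QXP^{\sharp_A}) \leq 2\|P\|_A\|Q\|_A w_A(X)$ immediately yields
\[
w_A(TQ^{\sharp_A} \pm QT) \leq 2 w_A(T)\|Q\|_A,
\]
which is exactly the claimed inequality.

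There is essentially no obstacle here: the only point requiring a word of care is the reduction $I^{\sharp_A} = P_A = I$, which is valid precisely because the hypothesis $A>0$ makes $P_A$ the identity operator; this is the same hypothesis already used throughout Theorem \ref{theorem:H1}. Everything else is a substitution of the particular values $X=Y=T$, $P=I$ into an inequality established earlier. I would therefore present the proof in one or two lines, simply invoking Theorem \ref{theorem:H1} with these choices and noting $\|I\|_A = 1$.
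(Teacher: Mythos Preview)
Your proposal is correct and is exactly the approach the paper takes: the paper derives Corollary~\ref{cor2} by setting $X=Y=T$ and $P=I$ in Theorem~\ref{theorem:H1}, using $\|I\|_A=1$ and (since $A>0$) $I^{\sharp_A}=P_A=I$.
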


Next we prove the following lemma, the idea of which is based on the result \cite[Lemma $3$]{BS} proved by Bernau and Smithes.

\begin{lemma} \label{lemma:G Y and S}
Let $X, T, Y \in \mathcal{B}_A(\mathcal{H})$ where $A>0$.  Then,  for all $x\in \mathcal{H}$
\begin{eqnarray*}\label{number G1}
|\langle X^{\sharp_A}TYx,x\rangle_A|+|\langle Y^{\sharp_A}TXx,x\rangle_A|\leq 2w_A(T)\|Xx\|_A\|Yx\|_A.
\end{eqnarray*}
\end{lemma}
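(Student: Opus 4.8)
The plan is to reduce the scalar inequality to a statement about the $2\times2$ operator matrix $Z=\left(\begin{smallmatrix} O & T \\ T & O \end{smallmatrix}\right)$ on $\mathcal{H}\oplus\mathcal{H}$, conjugated by a suitable column operator, and then to exploit the Cauchy--Schwarz-type control furnished by $w_B$. Fix $x\in\mathcal{H}$. First I would dispose of the degenerate cases: if $\|Xx\|_A=0$ or $\|Yx\|_A=0$, then since $T\in\mathcal{B}_A(\mathcal{H})$ one has $|\langle X^{\sharp_A}TYx,x\rangle_A|=|\langle TYx,Xx\rangle_A|\le \|TYx\|_A\|Xx\|_A=0$ and similarly the other term vanishes, so the inequality is trivial. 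Hence assume $\alpha:=\|Xx\|_A>0$ and $\beta:=\|Yx\|_A>0$.

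The key step is a normalization trick in the spirit of Bernau--Smithies: replace $X$ by $tX$ and $Y$ by $\tfrac1t Y$ for $t>0$; the left-hand side is unchanged (the first term scales by $t\cdot\tfrac1t$, the second by $\tfrac1t\cdot t$), while the right-hand side $2w_A(T)\|Xx\|_A\|Yx\|_A$ is also unchanged, so without loss of generality I may assume $\|Xx\|_A=\|Yx\|_A=:\gamma$. Now choose unimodular scalars $\lambda,\mu$ so that $\lambda\langle X^{\sharp_A}TYx,x\rangle_A=|\langle X^{\sharp_A}TYx,x\rangle_A|$ and $\mu\langle Y^{\sharp_A}TXx,x\rangle_A=|\langle Y^{\sharp_A}TXx,x\rangle_A|$, and form the vector $u=\big(\lambda\, Yx,\ \bar\mu\, Xx\big)\in\mathcal{H}\oplus\mathcal{H}$ (the exact placement of conjugates to be pinned down in the write-up). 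Then
\[
\langle Zu,u\rangle_B=\langle T(\bar\mu Xx),\lambda Yx\rangle_A+\langle T(\lambda Yx),\bar\mu Xx\rangle_A
=\bar\mu\bar\lambda\langle TXx,Yx\rangle_A+\lambda\mu\langle TYx,Xx\rangle_A,
\]
and using $\langle TYx,Xx\rangle_A=\langle Y^{\sharp_A}TX\cdot,\cdot\rangle$-type adjoint identities $\langle TYx,Xx\rangle_A=\langle X^{\sharp_A}TYx,x\rangle_A$ and $\langle TXx,Yx\rangle_A=\langle Y^{\sharp_A}TXx,x\rangle_A$, the two terms become the two nonnegative moduli, so $\langle Zu,u\rangle_B=|\langle X^{\sharp_A}TYx,x\rangle_A|+|\langle Y^{\sharp_A}TXx,x\rangle_A|$, which is real and nonnegative. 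Meanwhile $\|u\|_B^2=\|Yx\|_A^2+\|Xx\|_A^2=2\gamma^2$, so by definition of $w_B$,
\[
|\langle X^{\sharp_A}TYx,x\rangle_A|+|\langle Y^{\sharp_A}TXx,x\rangle_A|=\langle Zu,u\rangle_B\le w_B(Z)\,\|u\|_B^2=2\gamma^2\,w_B(Z).
\]
Finally, by Lemma \ref{lemma:1}(iv) (the ``in particular'' clause, applicable since $A>0$) we have $w_B(Z)=w_A(T)$, and $2\gamma^2=2\|Xx\|_A\|Yx\|_A$ after the normalization, which gives the claimed bound. Undoing the normalization $t$ is automatic since, as noted, neither side depended on it.

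I expect the main obstacle to be purely bookkeeping: getting the conjugations on $\lambda,\mu$ consistent so that \emph{both} cross terms in $\langle Zu,u\rangle_B$ come out as the \emph{positive} moduli simultaneously (one must check the phase choices are compatible, possibly by first replacing $T$ by $e^{i\psi}T$ is \emph{not} allowed since that changes $w_A$ only trivially — actually $w_A(e^{i\psi}T)=w_A(T)$, so this is a legitimate extra degree of freedom if needed). There is also the minor point that $Z=\left(\begin{smallmatrix}O&T\\T&O\end{smallmatrix}\right)$ indeed lies in $\mathcal{B}_B(\mathcal{H}\oplus\mathcal{H})$, which follows from the remark preceding Lemma \ref{lemma:1} since $T,T\in\mathcal{B}_A(\mathcal{H})$; and that $A>0$ is used both to invoke Lemma \ref{lemma:1}(iv) and to ensure $\|\cdot\|_A$ is a genuine norm so the adjoint identities $\langle TYx,Xx\rangle_A=\langle X^{\sharp_A}TYx,x\rangle_A$ hold verbatim. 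Everything else is a direct computation.
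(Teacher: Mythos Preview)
Your approach is correct and takes a genuinely different route from the paper. The paper argues directly on $\mathcal{H}$ via a polarization identity in the style of Bernau--Smithies: after choosing phases $\theta,\phi$ so that both inner products become nonnegative, it writes
\[
2|\langle X^{\sharp_A}TYx,x\rangle_A|+2|\langle Y^{\sharp_A}TXx,x\rangle_A|
=\langle e^{i\theta}T(p+q),p+q\rangle_A-\langle e^{i\theta}T(p-q),p-q\rangle_A
\]
with $p=\lambda e^{i\theta}Yx$ and $q=\lambda^{-1}e^{i\phi}Xx$, bounds each term on the right by $w_A(T)\|\cdot\|_A^2$, applies the parallelogram law, and minimizes over the real parameter $\lambda>0$. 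You instead lift to $\mathcal{H}\oplus\mathcal{H}$, invoke Lemma~\ref{lemma:1}(iv) to identify $w_B\left(\begin{smallmatrix}O&T\\T&O\end{smallmatrix}\right)=w_A(T)$, and your $t$-normalization plays exactly the role of the paper's $\lambda$-minimization. Your version is conceptually cleaner and reuses the paper's own $2\times2$ machinery; the paper's version is self-contained, not relying on Lemma~\ref{lemma:1}(iv) (which in turn rests on $B$-unitary invariance imported from \cite{BFP}). One small correction to your phase bookkeeping: with $u=(c_1Yx,c_2Xx)$ one computes $\langle Zu,u\rangle_B=\rho\,\langle Y^{\sharp_A}TXx,x\rangle_A+\bar\rho\,\langle X^{\sharp_A}TYx,x\rangle_A$ where $\rho=c_2\bar c_1$, so only the single unimodular parameter $\rho$ is actually free, not two independent ones; but this already suffices, since choosing $\rho=e^{i(\arg b-\arg a)/2}$ (writing $a,b$ for the two inner products) aligns the two summands and yields $|\rho a+\bar\rho b|=|a|+|b|$. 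The $e^{i\psi}T$ device you propose as a fallback therefore works but is not needed.
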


\begin{proof} 
Let $x\in \mathcal{H}$ and ${\theta, \phi}$ be real numbers such that $e^{i\phi}\langle Y^{\sharp_A}TXx,x \rangle_A=|\langle Y^{\sharp_A}TXx,x \rangle_A|$, $e^{2i\theta}\langle e^{-i\phi}X^{\sharp_A}TYx,x \rangle_A=|\langle e^{-i\phi}X^{\sharp_A}TYx,x \rangle_A|=|\langle X^{\sharp_A}TYx,x \rangle_A|.$
Then for non-zero real number $\lambda$, we have

\begin{eqnarray*}
&& 2e^{2i\theta}\langle TYx,e^{i\phi}Xx \rangle_A+2 e^{i\phi}\langle TXx,Yx \rangle_A\\ 
&& \hspace{.8 cm}=\langle e^{i\theta}T\left(\lambda e^{i\theta}Yx+\frac{1}{\lambda}e^{i\phi}Xx \right), \lambda e^{i\theta}Yx+\frac{1}{\lambda}e^{i\phi}Xx \rangle_A\\ 
&& \hspace{1.8 cm} - \langle e^{i\theta}T\left(\lambda e^{i\theta}Yx-\frac{1}{\lambda}e^{i\phi}Xx\right), \lambda e^{i\theta}Yx-\frac{1}{\lambda}e^{i\phi}Xx \rangle_A\\
\Rightarrow && 2e^{2i\theta}\langle e^{-i\phi}X^{\sharp_A}TYx,x \rangle_A+2 e^{i\phi}\langle Y^{\sharp_A}TXx,x \rangle_A\\ 
&& \hspace{.8 cm}=\langle e^{i\theta}T\left(\lambda e^{i\theta}Yx+\frac{1}{\lambda}e^{i\phi}Xx \right), \lambda e^{i\theta}Yx+\frac{1}{\lambda}e^{i\phi}Xx \rangle_A\\ 
&& \hspace{1.8 cm} - \langle e^{i\theta}T\left(\lambda e^{i\theta}Yx-\frac{1}{\lambda}e^{i\phi}Xx\right), \lambda e^{i\theta}Yx-\frac{1}{\lambda}e^{i\phi}Xx \rangle_A\\
\Rightarrow && 2\left|\langle X^{\sharp_A}TYx,x \rangle_A \right|+2 \left |\langle Y^{\sharp_A}TXx,x \rangle_A \right |\\ 
&& \hspace{.8 cm}=\langle e^{i\theta}T\left(\lambda e^{i\theta}Yx+\frac{1}{\lambda}e^{i\phi}Xx \right), \lambda e^{i\theta}Yx+\frac{1}{\lambda}e^{i\phi}Xx \rangle_A\\
&& \hspace{1.8 cm} - \langle e^{i\theta}T\left(\lambda e^{i\theta}Yx-\frac{1}{\lambda}e^{i\phi}Xx\right), \lambda e^{i\theta}Yx-\frac{1}{\lambda}e^{i\phi}Xx \rangle_A
\end{eqnarray*}
\begin{eqnarray*}
\Rightarrow && 2\left|\langle X^{\sharp_A}TYx,x \rangle_A \right|+2 \left |\langle Y^{\sharp_A}TXx,x \rangle_A \right |\\ 
&& \hspace{.8 cm}\leq \left |\langle e^{i\theta}T\left(\lambda e^{i\theta}Yx+\frac{1}{\lambda}e^{i\phi}Xx \right), \lambda e^{i\theta}Yx+\frac{1}{\lambda}e^{i\phi}Xx \rangle_A\right |\\
&& \hspace{1.8 cm} + \left |\langle e^{i\theta}T\left(\lambda e^{i\theta}Yx-\frac{1}{\lambda}e^{i\phi}Xx\right), \lambda e^{i\theta}Yx-\frac{1}{\lambda}e^{i\phi}Xx \rangle_A \right|\\
\Rightarrow && 2\left|\langle X^{\sharp_A}TYx,x \rangle_A \right|+2 \left |\langle Y^{\sharp_A}TXx,x \rangle_A \right |\\  
&&  \hspace{.8 cm}\leq w_A(T) \left( \left\|\lambda e^{i\theta}Yx+\frac{1}{\lambda}e^{i\phi}Xx\right\|_A^2+ \left\|\lambda e^{i\theta}Yx-\frac{1}{\lambda}e^{i\phi}Xx\right\|_A^2 \right)\\
\Rightarrow && \left|\langle X^{\sharp_A}TYx,x \rangle_A \right|+ \left |\langle Y^{\sharp_A}TXx,x \rangle_A \right | \leq w_A(T) \left( \lambda^2 \|Yx\|_A^2+\frac{1}{\lambda^2}\|Xx\|_A^2 \right).
\end{eqnarray*}
This holds for all non-zero real $\lambda.$ If $\|Yx\|_A \neq 0,$ then we choose $\lambda^2=\frac{\|Xx\|_A}{\|Yx\|_A}.$  So, we get
\begin{eqnarray*}
|\langle X^{\sharp_A}TYx,x\rangle_A|+|\langle Y^{\sharp_A}TXx,x\rangle_A|\leq 2w_A(T)\|Xx\|_A\|Yx\|_A.
\end{eqnarray*}
Clearly this inequality also holds when $\|Yx\|_A=0$, i.e., $Yx=0$. This completes the proof of the lemma.
\end{proof}

\begin{remark}
	In \cite{BPN} we have already generalized the result obtained by Bernau and Smithes \cite[Lemma $3$]{BS} and proved some important numerical radius inequalities.
	
\end{remark}

Now using Lemma \ref{lemma:G Y and S}, we obtain the following inequalities involving A-numerical radius, A-Crawford number and A-operator norm.  

\begin{theorem}\label{theorem: G1}
Let $X, T, Y \in \mathcal{B}_A(\mathcal{H})$ where $A>0$. Then 
\begin{eqnarray*}
c_A(X^{\sharp_A}TY)+w_A(Y^{\sharp_A}TX) &\leq & 2w_A(T)\|X\|_A\|Y\|_A,
\end{eqnarray*}
\begin{eqnarray*}
w_A(X^{\sharp_A}TY)+c_A(Y^{\sharp_A}TX) &\leq & 2w_A(T)\|X\|_A\|Y\|_A.
\end{eqnarray*}
\end{theorem}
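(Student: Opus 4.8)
The plan is to obtain both inequalities as immediate consequences of Lemma \ref{lemma:G Y and S}, the only additional ingredients being the elementary estimate $\|Xx\|_A\le\|X\|_A$ for $\|x\|_A=1$ (which holds since $\|Xx\|_A\le\|X\|_A\|x\|_A$) and the definitions of $w_A$ and $c_A$.

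First I would fix an arbitrary $x\in\mathcal{H}$ with $\|x\|_A=1$. Then $\|Xx\|_A\le\|X\|_A$ and $\|Yx\|_A\le\|Y\|_A$, so Lemma \ref{lemma:G Y and S} gives
\[
|\langle X^{\sharp_A}TYx,x\rangle_A|+|\langle Y^{\sharp_A}TXx,x\rangle_A|\le 2w_A(T)\|X\|_A\|Y\|_A .
\]
For the first claimed inequality, I would replace the first summand by the smaller quantity $c_A(X^{\sharp_A}TY)\le|\langle X^{\sharp_A}TYx,x\rangle_A|$ (valid because $\|x\|_A=1$), so that for every such $x$,
\[
c_A(X^{\sharp_A}TY)+|\langle Y^{\sharp_A}TXx,x\rangle_A|\le 2w_A(T)\|X\|_A\|Y\|_A .
\]
Taking the supremum over all $x$ with $\|x\|_A=1$, and noting that $c_A(X^{\sharp_A}TY)$ is a constant while $\sup_{\|x\|_A=1}|\langle Y^{\sharp_A}TXx,x\rangle_A|=w_A(Y^{\sharp_A}TX)$, yields
\[
c_A(X^{\sharp_A}TY)+w_A(Y^{\sharp_A}TX)\le 2w_A(T)\|X\|_A\|Y\|_A .
\]
The second inequality is proved symmetrically: starting again from the displayed bound for a unit vector $x$, I would instead bound the second summand below by $c_A(Y^{\sharp_A}TX)$ and then take the supremum over $\|x\|_A=1$ in the first summand, which produces $w_A(X^{\sharp_A}TY)$.

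I do not expect a genuine obstacle here; the proof is a short deduction from the lemma. The only point needing a little care is the order in which the infimum implicit in $c_A$ and the supremum defining $w_A$ are applied: because for a fixed unit vector $x$ the inequality already carries $c_A(X^{\sharp_A}TY)$ on the left, this constant term simply survives the supremum over $x$, so upgrading the remaining factor to its $w_A$ is legitimate. It is also worth recording that $X^{\sharp_A}TY$ and $Y^{\sharp_A}TX$ again lie in $\mathcal{B}_A(\mathcal{H})$, since $\mathcal{B}_A(\mathcal{H})$ is a subalgebra closed under the map $\sharp_A$, so that $w_A$ and $c_A$ of these operators are meaningful.
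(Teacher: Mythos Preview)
Your proof is correct and follows essentially the same route as the paper: apply Lemma \ref{lemma:G Y and S} to a unit vector $x$, use $\|Xx\|_A\le\|X\|_A$, $\|Yx\|_A\le\|Y\|_A$, replace one summand by the corresponding $c_A$, and take the supremum over $\|x\|_A=1$ in the remaining summand to obtain $w_A$. Your additional remarks on the order of the supremum/infimum and on closure of $\mathcal{B}_A(\mathcal{H})$ are sound but go slightly beyond what the paper spells out.
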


\begin{proof}
Taking $\|x\|_A=1$ in Lemma \ref{lemma:G Y and S}, we have
\begin{eqnarray*}
|\langle X^{\sharp_A}TYx,x\rangle_A|+|\langle Y^{\sharp_A}TXx,x\rangle_A|&\leq& 2w_A(T)\|X\|_A\|Y\|_A\\
\Rightarrow c_A(X^{\sharp_A}TY)+|\langle Y^{\sharp_A}TXx,x\rangle_A| &\leq& 2w_A(T)\|X\|_A \|Y\|_A.
\end{eqnarray*}
Taking supremum over $\|x\|_A=1$, we get
\[c_A(X^{\sharp_A}TY)+w_A(Y^{\sharp_A}TX) \leq 2w_A(T)\|X\|_A \|Y\|_A. \] 
Again taking $\|x\|_A=1$ in Lemma \ref{lemma:G Y and S},  we have
\begin{eqnarray*}
|\langle X^{\sharp_A}TYx,x\rangle_A|+|\langle Y^{\sharp_A}TXx,x\rangle_A|&\leq& 2w_A(T)\|X\|_A\|Y\|_A\\
\Rightarrow |\langle X^{\sharp_A}TYx,x\rangle_A| + c_A(Y^{\sharp_A}TX) &\leq& 2w_A(T)\|X\|_A \|Y\|_A.
\end{eqnarray*}
Taking supremum over $\|x\|_A=1$, we get
\[w_A(X^{\sharp_A}TY)+c_A(Y^{\sharp_A}TX) \leq 2w_A(T)\|X\|_A \|Y\|_A. \]
This completes the proof of the theorem.\\
\end{proof}

Now taking $Y=I, T=X$ and $X=Y$ in the above Theorem \ref{theorem: G1}, we get the following upper bounds for the numerical radius of product of two operators, which improve on the existing bounds.

\begin{cor}\label{cor: G12}
Let $X,Y \in \mathcal{B}_A(\mathcal{H})$ where $A>0$. Then the following inequalities hold:
\begin{eqnarray*}
w_A(XY) &\leq& 2w_A(X)\|Y\|_A-c_A(Y^{\sharp_A}X), \\
 w_A(XY) &\leq& 2w_A(Y)\|X\|_A-c_A(YX^{\sharp_A}).
\end{eqnarray*}
\end{cor}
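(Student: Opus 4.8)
The plan is to deduce both inequalities directly from Theorem~\ref{theorem: G1} by a suitable specialization, after recording a handful of elementary facts. First I would observe that for every $S\in\mathcal{B}_A(\mathcal{H})$ the identity $\langle S^{\sharp_A}x,x\rangle_A=\overline{\langle Sx,x\rangle_A}$ gives $w_A(S^{\sharp_A})=w_A(S)$ and $c_A(S^{\sharp_A})=c_A(S)$, while $\|S^{\sharp_A}\|_A=\|S\|_A$ is already available from the preliminaries. Since $A>0$ we have $P_A=I$, hence $(S^{\sharp_A})^{\sharp_A}=P_ASP_A=S$; also $I\in\mathcal{B}_A(\mathcal{H})$ with $I^{\sharp_A}=I$ and $\|I\|_A=1$, and $\mathcal{B}_A(\mathcal{H})$ is an algebra closed under $\sharp_A$, so every operator appearing below genuinely lies in $\mathcal{B}_A(\mathcal{H})$.

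For the first inequality I would apply the first inequality of Theorem~\ref{theorem: G1},
\[
c_A(X^{\sharp_A}TY)+w_A(Y^{\sharp_A}TX)\le 2w_A(T)\|X\|_A\|Y\|_A,
\]
with $Y$ replaced by $I$, $T$ replaced by $X$ and $X$ replaced by $Y$. The left-hand side then collapses to $c_A(Y^{\sharp_A}X)+w_A(XY)$ and the right-hand side to $2w_A(X)\|Y\|_A$, which rearranges to $w_A(XY)\le 2w_A(X)\|Y\|_A-c_A(Y^{\sharp_A}X)$.

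For the second inequality I would use the same inequality of Theorem~\ref{theorem: G1}, now with $X$ replaced by $I$, $T$ replaced by $Y$ and $Y$ replaced by $X^{\sharp_A}$. Then $X^{\sharp_A}TY$ becomes $YX^{\sharp_A}$ and $Y^{\sharp_A}TX$ becomes $(X^{\sharp_A})^{\sharp_A}\,Y\,I=XY$ (here the hypothesis $A>0$ enters, via $(X^{\sharp_A})^{\sharp_A}=X$), while on the right $\|X^{\sharp_A}\|_A=\|X\|_A$ and $\|I\|_A=1$; this yields $c_A(YX^{\sharp_A})+w_A(XY)\le 2w_A(Y)\|X\|_A$, which is the claimed bound. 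Equivalently, one may simply substitute $X\mapsto Y^{\sharp_A}$, $Y\mapsto X^{\sharp_A}$ in the already-proved first inequality and simplify using $w_A(Y^{\sharp_A}X^{\sharp_A})=w_A((XY)^{\sharp_A})=w_A(XY)$ and $c_A(XY^{\sharp_A})=c_A((YX^{\sharp_A})^{\sharp_A})=c_A(YX^{\sharp_A})$.

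There is no genuine analytic obstacle in this argument; the only points requiring care are checking that each substituted operator belongs to $\mathcal{B}_A(\mathcal{H})$ and keeping track of where strict positivity of $A$ is used, namely in $P_A=I$, which is what makes $(X^{\sharp_A})^{\sharp_A}=X$ and $\|I\|_A=1$. I would close by noting that, since the A-Crawford numbers $c_A(Y^{\sharp_A}X)$ and $c_A(YX^{\sharp_A})$ are nonnegative, both estimates sharpen the trivial bound $w_A(XY)\le 2\min\{w_A(X)\|Y\|_A,\,w_A(Y)\|X\|_A\}$, which is the sense in which they improve on the existing ones.
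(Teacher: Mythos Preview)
Your proposal is correct and follows essentially the same route as the paper: both inequalities are obtained by specializing Theorem~\ref{theorem: G1} with one of the three operators set to $I$. The paper's one-line hint (``taking $Y=I$, $T=X$ and $X=Y$'') literally yields only the first inequality; your explicit substitution $X\to I$, $T\to Y$, $Y\to X^{\sharp_A}$ (together with $(X^{\sharp_A})^{\sharp_A}=X$ from $A>0$) supplies the missing detail for the second, which the paper leaves to the reader.
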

 
\begin{remark}
For $A>0$, it is clear that the inequalities obtained in Corollary \ref{cor: G12} improve on the inequalities $w_A(XY)\leq 2w_A(X)\|Y\|_A $ and $w_A(XY)\leq 2w_A(Y)\|X\|_A$,  (see \cite[Th. 3.4]{Z}).\\ 
\end{remark}

Finally using Lemma \ref{lemma:G Y and S} we obtain new inequalities for B-numerical radius of $2 \times 2$ operator matrices with zero operators as main diagonal entries.

\begin{theorem}\label{theorem: G2}
Let $X, Y \in \mathcal{B}_A(\mathcal{H})$ where $A>0$. Then the following inequalities hold:
\begin{eqnarray*}
(i)~~\|X\|_A^2+ c_A(YX) &\leq& 2w_B\left(\begin{array}{cc}
    O&X \\
    Y&O
 \end{array}\right) \|X\|_A,
\end{eqnarray*}
\begin{eqnarray*}
(ii)~~m_A^2(X)+ w_A(YX) &\leq& 2w_B\left(\begin{array}{cc}
    O&X \\
    Y&O
 \end{array}\right) \|X\|_A,
\end{eqnarray*}
\begin{eqnarray*}
(iii)~~\|Y\|_A^2+ c_A(XY) &\leq& 2w_B\left(\begin{array}{cc}
    O&X \\
    Y&O
 \end{array}\right) \|Y\|_A,
\end{eqnarray*}
\begin{eqnarray*}
(iv)~~m_A^2(Y)+ w_A(XY) &\leq& 2w_B\left(\begin{array}{cc}
    O&X \\
    Y&O
 \end{array}\right) \|Y\|_A.
\end{eqnarray*}
\end{theorem}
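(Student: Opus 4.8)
The plan is to deduce all four inequalities from Lemma \ref{lemma:G Y and S} by choosing the operators $X,T,Y$ appearing there appropriately, together with the identification of $w_B$ of an off-diagonal operator matrix as the $A$-numerical radius of the associated "$2\times 2$ form". The cleanest route is to work inside $\mathcal{H}\oplus\mathcal{H}$: set $Z=\left(\begin{smallmatrix} O&X\\ Y&O\end{smallmatrix}\right)$, and note that $Z\in\mathcal{B}_B(\mathcal{H}\oplus\mathcal{H})$ with $Z^{\sharp_B}=\left(\begin{smallmatrix} O&Y^{\sharp_A}\\ X^{\sharp_A}&O\end{smallmatrix}\right)$. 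For (i) I would apply Lemma \ref{lemma:G Y and S} in $\mathcal{B}_B(\mathcal{H}\oplus\mathcal{H})$ with the triple $\big(\mathcal{X},\mathcal{T},\mathcal{Y}\big)=\big(\left(\begin{smallmatrix}X&O\\O&O\end{smallmatrix}\right),\,Z,\,\left(\begin{smallmatrix}O&O\\O&X\end{smallmatrix}\right)\big)$ or a similar pairing chosen so that $\mathcal{X}^{\sharp_B}\mathcal{T}\mathcal{Y}$ collapses to a matrix whose only nonzero block is a scalar multiple of $X^{\sharp_A}X$ and $\mathcal{Y}^{\sharp_B}\mathcal{T}\mathcal{X}$ collapses to one with block $YX$; then $\|\mathcal{X}u\|_B\|\mathcal{Y}u\|_B\le \|X\|_A\,\|X\|_A$ for unit $u$, and feeding a unit vector supported in the relevant copy of $\mathcal{H}$ turns $|\langle \mathcal{X}^{\sharp_B}\mathcal{T}\mathcal{Y}u,u\rangle_B|$ into $\|Xx\|_A^2$, whose supremum is $\|X\|_A^2$, while $|\langle \mathcal{Y}^{\sharp_B}\mathcal{T}\mathcal{X}u,u\rangle_B|$ becomes $|\langle YXx,x\rangle_A|$ with infimum $c_A(YX)$. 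Taking the appropriate sup/inf over unit vectors, and using $w_B(Z)$ on the right, yields (i).

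For (ii) I would run the same computation but take the infimum instead of the supremum on the $X^{\sharp_A}X$ term: $\inf_{\|x\|_A=1}\|Xx\|_A^2=m_A^2(X)$, and take the supremum on the other term to get $w_A(YX)$; the two choices of which factor to extremize are exactly the two lines of Theorem \ref{theorem: G1}, now transported to the block setting. Parts (iii) and (iv) are obtained from (i) and (ii) by the symmetry $X\leftrightarrow Y$ in the off-diagonal matrix: using Lemma \ref{lemma:1}(ii) (valid since $A>0$), $w_B\left(\begin{smallmatrix}O&X\\Y&O\end{smallmatrix}\right)=w_B\left(\begin{smallmatrix}O&Y\\X&O\end{smallmatrix}\right)$, so interchanging the roles of $X$ and $Y$ in (i), (ii) and rewriting the right-hand side via this identity produces (iii), (iv) with $\|Y\|_A$, $m_A(Y)$, $c_A(XY)$, $w_A(XY)$. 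Throughout, the key structural facts are $\|Z\|_B\le\ $ nothing is needed—only $w_B(Z)$—and the block formulas $\|\mathcal{X}\|_B=\|X\|_A$, $m_B(\mathcal{X})=m_A(X)$ for $\mathcal{X}=\mathrm{diag}(X,O)$ or $\mathrm{diag}(O,X)$, which follow directly from the definitions of $\|\cdot\|_B$ and $m_B$ restricted to $\overline{\mathcal{R}(B)}=\overline{\mathcal{R}(A)}\oplus\overline{\mathcal{R}(A)}$.

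The main obstacle I anticipate is bookkeeping with the $A$-adjoints and ranges inside $\mathcal{H}\oplus\mathcal{H}$: I must verify that the auxiliary block operators $\mathcal{X},\mathcal{Y}$ actually lie in $\mathcal{B}_B(\mathcal{H}\oplus\mathcal{H})$ and that $\mathcal{X}^{\sharp_B}$ is the block operator I expect (this is where the cited $T^{\sharp_B}=(T_{ji}^{\sharp_A})$ from \cite[Lemma 3.1]{BFP} is used), and that the product $\mathcal{X}^{\sharp_B}Z\mathcal{Y}$ genuinely reduces to a single nonzero block with the claimed entry—getting the placement of $X$ versus $X^{\sharp_A}$, and $YX$ versus $XY$, right in each of the four cases. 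Once the correct pairing is pinned down for (i), the hypothesis $A>0$ (needed so that Lemma \ref{lemma:G Y and S} and Lemma \ref{lemma:1}(ii)–(iii) apply, i.e. $B>0$) ensures everything goes through, and (ii)–(iv) are then immediate variations.
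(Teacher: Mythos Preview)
Your overall strategy---pass to $\mathcal{H}\oplus\mathcal{H}$, set $Z=\left(\begin{smallmatrix}O&X\\Y&O\end{smallmatrix}\right)$, and apply Lemma~\ref{lemma:G Y and S} in the $B$-setting with $\mathcal{T}=Z$---is exactly the paper's route. The gap is in your choice of auxiliary operators. With $\mathcal{X}=\left(\begin{smallmatrix}X&O\\O&O\end{smallmatrix}\right)$ and $\mathcal{Y}=\left(\begin{smallmatrix}O&O\\O&X\end{smallmatrix}\right)$ a direct computation gives
\[
\mathcal{X}^{\sharp_B}Z\mathcal{Y}=\begin{pmatrix}O&X^{\sharp_A}X^2\\O&O\end{pmatrix},\qquad
\mathcal{Y}^{\sharp_B}Z\mathcal{X}=\begin{pmatrix}O&O\\X^{\sharp_A}YX&O\end{pmatrix},
\]
so neither block is $X^{\sharp_A}X$ or $YX$. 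Worse, for any $u$ supported in a single summand one of $\mathcal{X}u$, $\mathcal{Y}u$ vanishes, so the right-hand side of Lemma~\ref{lemma:G Y and S} is zero and the inequality is vacuous. Your side claim $m_B(\mathrm{diag}(X,O))=m_A(X)$ is also false: that minimum modulus is $0$.

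The fix is much simpler than searching for a clever block pairing, and it is what the paper does: take $\mathcal{X}=Z$ and $\mathcal{Y}=I$ (i.e.\ the special case $X=T$, $Y=I$ of Lemma~\ref{lemma:G Y and S}, transported to the $B$-setting). This yields, for every $u\in\mathcal{H}\oplus\mathcal{H}$ with $\|u\|_B=1$,
\[
\|Zu\|_B^2+|\langle Z^2u,u\rangle_B|\le 2\,w_B(Z)\,\|Zu\|_B.
\]
Since $Z^2=\mathrm{diag}(XY,YX)$ and $Zu=(Xx_2,Yx_1)$ for $u=(x_1,x_2)$, choosing $u=(0,x_2)$ with $\|x_2\|_A=1$ gives
\[
\|Xx_2\|_A^2+|\langle YXx_2,x_2\rangle_A|\le 2\,w_B(Z)\,\|Xx_2\|_A\le 2\,w_B(Z)\,\|X\|_A,
\]
and (i), (ii) follow by taking supremum or infimum over $\|x_2\|_A=1$ on the appropriate term. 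Your symmetry argument for (iii), (iv) via Lemma~\ref{lemma:1}(ii) is a clean alternative to the paper's ``take $x_2=0$'' and works fine.
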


\begin{proof}
	
Taking  $X=T$ and $Y=I$ in Lemma \ref{lemma:G Y and S} we get,	
\begin{eqnarray*}
	\|Tx\|_A^2+|\langle T^2x,x\rangle_A| &\leq& 2w_A(T)\|Tx\|_A \|x\|_A.
\end{eqnarray*}	
This also holds if we take  $T=\left(\begin{array}{cc}
    O&X \\
    Y&O
\end{array}\right)$ and $x=(x_1,x_2) \in \mathcal{H} \oplus \mathcal{H}$ with $\|x\|_B=1$, i.e., $\|x_1\|_A^2+\|x_2\|_A^2=1$. Therefore we get,
\begin{eqnarray*}
\|Xx_2\|_A^2+\|Yx_1\|_A^2+|\langle XYx_1,x_1\rangle_A+\langle YXx_2,x_2\rangle_A| &\leq &   2w_B (T) \left( \|Xx_2\|_A^2+\|Yx_1\|_A^2\right)^{\frac{1}{2}}.
\end{eqnarray*}
Taking $x_1=0$, we get
\begin{eqnarray*}
\|Xx_2\|_A^2+|\langle YXx_2,x_2\rangle|_A &\leq &   2 w_B \left(\begin{array}{cc}
    O&X \\
    Y&O
\end{array}\right)  \|Xx_2\|_A\\
\Rightarrow \|Xx_2\|_A^2+|\langle YXx_2,x_2\rangle_A| &\leq &   2 w _B\left(\begin{array}{cc}
    O&X \\
    Y&O
\end{array}\right)  \|X\|_A\\
\Rightarrow \|Xx_2\|_A^2+c_A(YX) &\leq &   2 w _B\left(\begin{array}{cc}
    O&X \\
    Y&O
\end{array}\right)  \|X\|_A
\end{eqnarray*}
Taking supremum over $\|x_2\|_A=1$, we get the inequality (i), i.e., 
\begin{eqnarray*}
\|X\|_A^2+c_A(YX) &\leq &   2 w _B\left(\begin{array}{cc}
    O&X \\
    Y&O
\end{array}\right)  \|X\|_A.
\end{eqnarray*}
Again from the inequality 
\begin{eqnarray*}
\|Xx_2\|_A^2+|\langle YXx_2,x_2\rangle_A| &\leq &   2 w_B \left(\begin{array}{cc}
    O&X \\
    Y&O
\end{array}\right)  \|X\|_A, ~~\mbox{we get}
\end{eqnarray*} 
\begin{eqnarray*}
m_A^2(X)+|\langle YXx_2,x_2\rangle_A| &\leq &   2 w_B \left(\begin{array}{cc}
    O&X \\
    Y&O
\end{array}\right)  \|X\|_A.
\end{eqnarray*}
Taking supremum over $\|x_2\|_A=1$, we get the inequality (ii), i.e.,
\begin{eqnarray*}
m_A^2(X) + w_A(YX) &\leq &   2 w_B \left(\begin{array}{cc}
    O&X \\
    Y&O
\end{array}\right) \|X\|_A.
\end{eqnarray*}
Similarly taking $x_2=0$ and supremum over $ \|x_1\|_A=1,$ we can prove the remaining inequalities.\\
\end{proof}

Next taking $X=Y=T$ in  Theorem \ref{theorem: G2} and using  Lemma \ref{lemma:1} (iv), we get the following lower bounds for A-numerical radius.

\begin{theorem}\label{theorem:lower bounds}
Let $T\in \mathcal{B}_A(\mathcal{H})$ with $\|T\|_A \neq 0$ where $A>0$. Then the following inequalities hold:
\begin{eqnarray*}\label{number 2}
w_A(T)&\geq& \frac{\|T\|_A}{2}+\frac{c_A(T^2)}{2\|T\|_A}, 
\end{eqnarray*}
\begin{eqnarray*}\label{number 3}  
w_A(T)&\geq& \frac{m_A^2(T)}{2\|T\|_A}+\frac{w_A(T^2)}{2\|T\|_A}.
\end{eqnarray*}
\end{theorem}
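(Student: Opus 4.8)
The plan is to derive Theorem~\ref{theorem:lower bounds} directly from Theorem~\ref{theorem: G2} by specializing $X=Y=T$, and then rewriting the $B$-numerical radius of the resulting off-diagonal operator matrix via Lemma~\ref{lemma:1}~(iv). First I would set $X=Y=T$ in part~(i) of Theorem~\ref{theorem: G2}; since $A>0$ and $\|T\|_A\neq 0$ we may divide by $2\|T\|_A$, obtaining
\[
w_B\left(\begin{array}{cc} O&T \\ T&O \end{array}\right) \geq \frac{\|T\|_A}{2} + \frac{c_A(T^2)}{2\|T\|_A}.
\]
By the ``in particular'' clause of Lemma~\ref{lemma:1}~(iv), $w_B\left(\begin{smallmatrix} O&T \\ T&O \end{smallmatrix}\right)=w_A(T)$, which yields the first claimed inequality.

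Next I would repeat the same substitution $X=Y=T$ in part~(ii) of Theorem~\ref{theorem: G2}, giving
\[
m_A^2(T) + w_A(T^2) \leq 2\, w_B\left(\begin{array}{cc} O&T \\ T&O \end{array}\right) \|T\|_A.
\]
Again dividing by $2\|T\|_A$ and invoking $w_B\left(\begin{smallmatrix} O&T \\ T&O \end{smallmatrix}\right)=w_A(T)$ from Lemma~\ref{lemma:1}~(iv) produces the second inequality. Parts~(iii) and (iv) of Theorem~\ref{theorem: G2} collapse to the same two statements when $X=Y=T$, so no additional work is needed there.

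I do not anticipate a genuine obstacle here: the argument is a two-line specialization plus a substitution of an identity. The only points requiring a moment's care are checking that the hypotheses of Theorem~\ref{theorem: G2} are met (namely $A>0$ and $X=Y=T\in\mathcal{B}_A(\mathcal{H})$, both of which are assumed) and that the division by $2\|T\|_A$ is legitimate, which is exactly why the hypothesis $\|T\|_A\neq 0$ is imposed in the statement. If one wanted to be completely explicit, one could also note that $c_A(T^2)\le w_A(T^2)$ and $m_A(T)\le\|T\|_A$ make the two bounds consistent with the general lower bound $w_A(T)\ge\frac12\|T\|_A$, but this is merely a sanity check and not part of the proof.
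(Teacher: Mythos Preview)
Your proposal is correct and follows essentially the same approach as the paper: the paper simply states that the result follows by taking $X=Y=T$ in Theorem~\ref{theorem: G2} and using Lemma~\ref{lemma:1}~(iv). Your write-up spells out the division by $2\|T\|_A$ and the identification $w_B\left(\begin{smallmatrix} O&T \\ T&O \end{smallmatrix}\right)=w_A(T)$ explicitly, which is exactly what the paper intends.
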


\begin{remark}
Here we note that the two inequalities obtain in Theorem \ref{theorem:lower bounds} are incomparable. So, using these bounds we have a new lower bound 
\[w_A(T)\geq \frac{1}{2\|T\|_A}\max \big\{ \|T\|_A^2+c_A(T^2), m_A^2(T)+w_A(T^2) \big\},\]
where $T\in \mathcal{B}_A(\mathcal{H})$ with $\|T\|_A \neq 0$. It is clear that this inequality improves on the first inequality in \cite[Cor. 2.8]{Z}.
\end{remark}

\noindent \textbf{Acknowledgements:}

First and third author would like to thank UGC, Govt. of India for the financial support in the form of JRF. Prof. Kallol Paul would like to thank RUSA 2.0, Jadavpur University for the partial support.

\bibliographystyle{amsplain}

\begin{thebibliography}{99}

\bibitem {ACG} M.L. Arias, G. Corach and M.C. Gonzalez, Metric properties of projections in semiHilbertian spaces, Integral Equations Operator Theory 62 (2008) 11-28.

\bibitem{ACG2} M.L. Arias, G. Corach and M.C. Gonzalez, Partial isometries in semi-Hilbertian spaces, Linear Algebra Appl. 428 (2008) 1460-1475.
	
\bibitem{AS} O.A.M.S. Ahmed, A. Saddi, A-m-Isometric operators in semi-hilbertian spaces, Linear Algebra Appl. 436 (2012) 3930-3942.


\bibitem{BFA} H. Baklouti, K. Feki and O. A. M. Sid Ahmed, Joint numerical ranges of operators in semi-Hilbertian spaces, Linear Algebra Appl. 555 (2018) 266-284.
 
\bibitem{BBP1} S. Bag, P. Bhunia  and K. Paul, Bounds of numerical radius of bounded linear operator using t-Aluthge transform, arXiv:1904.12096v2 [math.FA].


\bibitem{BS} S.J. Bernau and F. Smithies, A note on normal operators,  Proc. Cambridge Philos. Soc. 59 (1963) 727-729.

\bibitem {B} R. Bhatia, Matrix Analysis, Springer, New York, 1997.
	
\bibitem{BBP} P. Bhunia, S. Bag and K. Paul, Numerical radius inequalities and its applications in estimation of zeros of polynomials, Linear Algebra Appl. 573 (2019) 166-177.
	
	
\bibitem{BBP2} P. Bhunia, S. Bag and K. Paul, Numerical radius inequalities of operator matrices with applications, Linear Multilinear Algebra, (2019), \url{https://doi.org/10.1080/03081087.2019.1634673}.

\bibitem{BFP} P. Bhunia, K. Feki, K. Paul, A-Numerical radius orthogonality and parallelism of semi-Hilbertian space operators and their applications, arXiv:2001.04522v1 [math.FA].


\bibitem{BPN} P. Bhunia, K. Paul and R.K. Nayak, Sharp inequalities for the numerical radius of Hilbert space operators and operator matrices, arXiv: 1908.04499v2 [math.FA].

\bibitem{doug} R.G. Douglas, On majorization, factorization and range inclusion of operators in Hilbert space, Proc. Amer. Math. Soc. 17 (1966) 413-416.

	
\bibitem{D} S.S. Dragomir, Inequalities for the numerical radius of linear operators in Hilbert spaces, Springer, 2013.
	
\bibitem{GR} K.E. Gustafson and D.K.M. Rao, Numerical range, Springer, New York, 1997.
	
\bibitem{HKS} O. Hirzallah, F. Kittaneh and K. Shebrawi, Numerical radius inequalities for certain $2\times 2$ operator matrices,  Integral Equations Operator Theory 71 (2011) 129-147.
	
\bibitem{HKS2} O. Hirzallah, F. Kittaneh and K. Shebrawi, Numerical radius inequalities for commutators of Hilbert space operators,  Numer. Funct. Anal. Optim. 32 (2011) 739-749.

\bibitem {KMY} F. Kittaneh, M.S. Moslehian and T. Yamazaki, Cartesian decomposition and numerical radius inequalities, Linear Algebra Appl. 471 (2015) 46-53.
	
\bibitem{K} F. Kittaneh, Numerical radius inequalities for Hilbert spaces operators, Studia Math. 168 (2005) 73-80.

\bibitem{MXZ} M.S. Moslehian, Q. Xu and A. Zamani, Seminorm and numerical radius inequalities of operators in semi-Hilbertian spaces, Linear Algebra Appl. 591 (2020) 299-321.

\bibitem{MKX} M.S. Moslehian, M. Kian and Q. Xu, Positivity of $2 \times 2$ block matrices of operators, Banach J. Math. Anal. 13 (2019) 726-743.


\bibitem {PB} K. Paul and S. Bag, On the numerical radius of a matrix and estimation of bounds for zeros of a polynomial, Int. J. Math. Math. Sci. 2012 (2012) Article Id 129132, \url{https://doi.org/10.1155/2012/129132}.

\bibitem {PB2} K. Paul and S. Bag, Estimation of bounds for the zeros of a  polynomial using numerical radius, Appl. Math. Comput. 222 (2013) 231-243. 

\bibitem{S} K. Shebrawi, Numerical radius inequalities for certain 2$\times 2$ operator matrices II, Linear Algebra Appl. 523 (2017) 1-12.

\bibitem{Y} T. Yamazaki, On upper and lower bounds of the numerical radius and an equality condition, Studia Math. 178 (2007) 83-89.
	
\bibitem{Z} A. Zamani, A-Numerical radius inequalities for semi-Hilbertian space operators, Linear Algebra Appl. 578 (2019) 159-183.

\bibitem{Z2} A. Zamani, Some lower bounds for the numerical radius of Hilbert space operators, Adv. Oper. Theory 2 (2017) 98-107.

\end{thebibliography}

\end{document}